\documentclass [a4paper, 12pt, reqno]{amsart}
\usepackage [latin1]{inputenc}
\usepackage {a4}
\usepackage{amscd}
\usepackage{epsfig}
\usepackage{amssymb}
\usepackage{amsmath}
\usepackage{amsthm}
\usepackage[T1]{fontenc}
\usepackage{ae,aecompl}
\usepackage[arrow, matrix, curve]{xy}

\usepackage{color}

\usepackage{geometry}
\newcommand{\R} {\ensuremath{\mathbb{R}}}

\newcommand{\C} {\ensuremath{\mathbb{C}}}

\newcommand{\OO}{\mathcal{O}}
\renewcommand{\o}[1]{\overline{#1}}

\newcommand{\dq}{\overline{\partial}}
\newcommand{\wt}[1]{\widetilde{#1}}

\DeclareMathOperator{\Sing}{Sing}

\DeclareMathOperator{\Dom}{Dom}

\newtheorem {satz} {Satz} [section]

\newtheorem {thm} [satz] {Theorem}

\DeclareMathOperator{\supp}{supp}

\renewcommand{\theta}{\vartheta}

\title[Canonical sheaves at isolated canonical Gorenstein singularities] 
{Canonical sheaves at isolated canonical Gorenstein singularities}

\dedicatory{Dedicated to the memory of Nils {\O}vrelid}

\author{J. Ruppenthal}

\address{Department of Mathematics, University of Wuppertal, Gau{\ss}str. 20, 42119 Wuppertal, Germany.}
\email{ruppenthal@uni-wuppertal.de}

\date{\today}

\subjclass[2000]{32J25, 32C35, 32W05}

\keywords{Cauchy-Riemann equations, $L^2$-theory, singular complex spaces,
canonical sheaf, dualising sheaf, canonical singularities, resolution of singularities.}

\begin{document}

\begin{abstract} 
It is well known that
the Grauert--Riemenschneider canonical sheaf $\mathcal{K}_X$ of holomorphic square-integrable $n$-forms
is a central tool in $L^2$-theory for the $\dq$-operator on a singular complex space $X$ of pure dimension $n$.
It was shown a few years ago that a comprehensive $L^2$-theory requires also 
the study of the sheaf $\mathcal{K}_X^s$ of holomorphic square-integrable $n$-forms with a Dirichlet boundary condition
at the singular set of $X$. In the present paper, we describe and classify the behaviour of $\mathcal{K}_X^s$
in isolated canonical Gorenstein singularities,
and give applications to the $L^2$-theory for the $\dq$-operator on such spaces.
\end{abstract}

\maketitle

~\\[-16mm]
\section{Introduction}




The canonical sheaf is one of the most important objects associated to a smooth complex manifold $X$.
It plays a crucial role e.g. in Serre duality, the Kodaira vanishing theorem or in classification theory (Kodaira dimension).
Unfortunately, these roles do not generalise in the same way to the singular setting.
On a singular complex space $X$ of pure dimension $n$,
there are different ways to define sheaves of holomorphic $n$-forms.

The two most important and well-studied options are
the Grothendieck dualising sheaf $\omega_X$, which allows for the generalisation of Serre duality,
and the Grauert--Riemenschneider canonical sheaf $\mathcal{K}_X$, which allows for the generalisation of the Kodaira
vanishing theorem (in this setting also called Grauert--Riemenschneider vanishing theorem).
The two notions do not coincide in general (see \cite{GR}).

On a normal complex space $X$ of pure dimension $n$, the dualising sheaf $\omega_X$ can be simply realised as the sheaf of 
holomorphic $n$-forms on the regular locus of the variety (see, e.g., \cite{GR}, Satz 3.1), 
whereas the Grauert--Riemenschneider canonical sheaf $\mathcal{K}_X$
is the sheaf of holomorphic square-integrable $n$-forms on the regular locus of the variety. So, we have an inclusion
\begin{eqnarray}\label{eq:intro001}
\mathcal{K}_X &\subset& \omega_X.
\end{eqnarray}
Moreover, $\mathcal{K}_X = \pi_* \mathcal{K}_M$, where $\pi: M \rightarrow X$ is any resolution of singularities (see \cite{GR}),
as the $L^2$-property of $n$-forms is invariant under modifications.

Let now, for the moment, $X$ be a Gorenstein space (i.e., normal, Cohen-Macaulay and $\omega_X$ locally free),
for example a normal locally complete intersection. Then $\omega_X=\OO(K_X)$, where $K_X$ is the canonical line bundle of $X$,
and the behavior of $\omega_X$ under a resolution of singularities $\pi: M\rightarrow X$
determines the type of the singularities of $X$ in the sense of the Minimal Model Program.
The inclusion \eqref{eq:intro001} becomes an equality if and only if $X$ has only canonical singularities
(see, e.g., \cite{Duke}, Theorem 4.3),
and then, both, duality and Grauert--Riemenschneider vanishing hold for $\mathcal{K}_X = \omega_X$.

\bigskip
In \cite{Duke}, we introduced another, new kind of canonical sheaf on an arbitrary Hermitian complex space $X$ of pure dimension $n$,
which we denote by $\mathcal{K}_X^s$. 
It is the sheaf of germs of holomorphic square-integrable
$n$-forms which satisfy a kind of Dirichlet boundary condition at the singular set of $X$.
It is defined as the kernel of the $\dq_s$-operator on square-integrable $(n,0)$-forms,
where the $\dq_s$-operator is a localised version of the $L^2$-closure of the $\dq$-operator
acting on forms with support away from the singular set (see Section \ref{sec:dq-complexes}).
The $\dq_s$-closed functions are precisely the weakly holomorphic functions,
and the $\dq_s$-complex of $(0,q)$-forms is exact in a singular point if and only if the singularity is rational
(\cite{Serre}, Theorem 1.1 and its corollaries).

If $X$ is a Hermitian complex space with only isolated singularities,
it is shown in \cite{Duke}, Theorem 1.9, that the $\dq_s$-complex
\begin{eqnarray}\label{eq:intro1}
0\rightarrow \mathcal{K}_X^s \hookrightarrow \mathcal{F}^{n,0} \overset{\dq_s}{\longrightarrow}
\mathcal{F}^{n,1} \overset{\dq_s}{\longrightarrow} \mathcal{F}^{n,2} \overset{\dq_s}{\longrightarrow} ... 
\end{eqnarray}
is a fine resolution of $\mathcal{K}_X^s$, where the $\mathcal{F}^{n,q}$ are the sheaves of germs of $L^2$-forms
in the domain of the $\dq_s$-operator.
It follows that
\begin{eqnarray*}
H^q(X, \mathcal{K}_X^s) &\cong& H^{n,q}_{s,loc}(X) := H^q\big( \Gamma(X,\mathcal{F}^{n,*})\big),\\
H^q_{cpt}(X, \mathcal{K}_X^s) &\cong& H^{n,q}_{s,cpt}(X) := H^q\big( \Gamma_{cpt}(X,\mathcal{F}^{n,*})\big),
\end{eqnarray*}
i.e. the cohomology of $\mathcal{K}_X^s$ can be represented by the $L^2$-$\dq_s$-cohomology.







\bigskip

In \cite{OV}, \cite{Duke} and \cite{Serre}, the new canonical sheaf $\mathcal{K}_X^s$ was the key tool
to understand the $L^2$-cohomology for the $\dq$-operator in the sense of distributions
at {\it isolated} singularities in terms of a resolution of singularities.
It was used that $\mathcal{K}_X^s$ is a {\it coherent} analytic sheaf and that there exists a resolution of singularities
$\pi: M \rightarrow X$ such that
\begin{eqnarray}\label{eq:divisor}
\mathcal{K}_X^s &=& \pi_* \big( \mathcal{K}_M \otimes \OO(-D)\big) ,
\end{eqnarray}
where $D$ is a certain effective divisor on $M$ (\cite{Duke}, Theorem 1.10). The proof of the coherence
of $\mathcal{K}_X^s$ in \cite{Duke} was rather involved, based on some sophisticated
arguments from \cite{OV}, and is so far only known for isolated singularities.
The first objective of the present paper is to give a more direct, concise and simple proof
of this issue:

\begin{thm}\label{thm:main00}
Let $X$ be a complex space of pure dimension with only isolated \linebreak
singularities. Then $\mathcal{K}_X^s$ is coherent.
\end{thm}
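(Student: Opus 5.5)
Coherence is a local question, and away from the isolated singular set $\mathcal{K}_X^s$ coincides with the locally free sheaf of holomorphic $n$-forms. So it is enough to work near one singular point $p$. There I would show that $\mathcal{K}_X^s$ equals $\pi_*\big(\mathcal{K}_M\otimes\OO(-D)\big)$ for a suitable resolution $\pi:M\to X$ and an effective divisor $D$ supported on the exceptional set. Coherence then follows from Grauert's direct image theorem, since $\pi$ is proper.

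The first step is to make the Dirichlet condition concrete. Fix a resolution $\pi: M\to X$ that is biholomorphic outside $E=\pi^{-1}(p)$, with $E$ a divisor with normal crossings, $E=\bigcup E_j$. Then $\mathcal{K}_X$ is the pushforward of $\mathcal{K}_M$. A germ $\omega\in\mathcal{K}_{X,p}$ lies in $\mathcal{K}^s_{X,p}$ exactly when $\omega\in\Dom\dq_s$, i.e.\ when there are $L^2$ forms $\omega_k$ with support away from $p$ such that $\omega_k\to\omega$ and $\dq\omega_k\to 0$ in $L^2$ near $p$. The natural candidates are $\omega_k=\chi_k\omega$, where the cut-offs $\chi_k$ equal $1$ outside a shrinking neighbourhood of $p$. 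Membership then reduces to whether $\|\dq\chi_k\wedge\omega\|_{L^2}\to 0$ for a suitable family $\chi_k$, which behaves like a capacity condition. Pulling back to $M$, this becomes an integrability condition on $\pi^*\omega$ near each component $E_j$, weighted by the Hermitian metric of $X$ pulled back to $M$. Since that metric degenerates along $E$ in a way controlled by the vanishing orders of the coordinate functions $\pi^*z_i$, I expect the condition to read: $\pi^*\omega$ vanishes along $E_j$ to order at least some explicit integer $d_j\ge 0$, determined by the geometry of $\pi$.

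The second step turns this into the identity $\mathcal{K}^s_X=\pi_*\big(\mathcal{K}_M\otimes\OO(-\sum d_jE_j)\big)$. For sufficiency, if $\pi^*\omega$ vanishes to order $d_j$ along each $E_j$, an explicit logarithmic cut-off in the distance to $p$ makes $\dq\chi_k\wedge\omega\to 0$ in $L^2$. For necessity, if the vanishing order is smaller along some $E_j$, a duality argument should give an obstruction. One pairs against a suitable $\dq$-closed test form concentrated near a generic point of $E_j$, or uses a Stokes/residue computation on a transversal disc. Both inclusions would be checked locally at a generic point of $E_j$, where there are coordinates $(t,w)$ with $E_j=\{t=0\}$. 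The behaviour over the lower-dimensional set of crossings should not matter, because the conditions are holomorphic vanishing conditions and extend across codimension two (Riemann extension).

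I expect the main obstacle to be the necessity direction in the second step. There one must exclude that a cleverer approximating sequence, not of cut-off form, realises $\omega$ in $\Dom\dq_s$. To handle it, I would first reduce to cut-off sequences by showing that $\dq_s$-closedness is a local condition near $E$, and then use the explicit metric comparison $\pi^*ds^2_X\sim\sum|d(\pi^*z_i)|^2$ near generic points of $E_j$. If the sharp orders $d_j$ are hard to pin down, a weaker statement already suffices for coherence. It is enough to know that $\mathcal{K}^s_X$ lies between $\pi_*\big(\mathcal{K}_M\otimes\OO(-NE)\big)$ and $\mathcal{K}_X$ for large $N$, and that it is an $\OO_X$-submodule. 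The quotient $\mathcal{K}_X/\pi_*\big(\mathcal{K}_M\otimes\OO(-NE)\big)$ is a finite-dimensional skyscraper at $p$, so any $\OO_X$-submodule in between is automatically coherent.
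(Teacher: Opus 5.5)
Your primary plan, the exact identification $\mathcal{K}_X^s=\pi_*\big(\mathcal{K}_M\otimes\OO(-\sum d_jE_j)\big)$, is not carried out. The necessity direction (ruling out approximating sequences not of cut-off form) is only sketched, and it is essentially the content of \cite{Duke}, Theorem~1.10, which the paper deliberately avoids. You do not need it, though: your closing sandwich argument is a complete proof, and it differs from the paper's.

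Both arguments produce a coherent subsheaf of $\mathcal{K}_X^s$ that coincides with $\mathcal{K}_X^s$ off the singular point. The paper takes the $\OO_X$-span of the restrictions $dz_{j_1}\wedge\cdots\wedge dz_{j_n}|_X$. These lie in $\mathcal{K}_X^s$ because they are bounded (Theorem~\ref{thm:bounded}, via Sibony's plurisubharmonic cut-offs), and they generate $\Omega^n_X$ at regular points. The paper then adjoins finitely many generators of the stalk $\mathcal{K}^s_{X,x_0}$, using that $\OO_{X,x_0}$ is Noetherian.

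You take $\pi_*\big(\mathcal{K}_M\otimes\OO(-NE)\big)$ instead. Its inclusion in $\mathcal{K}_X^s$ is the logarithmic cut-off argument of Section~\ref{ssec:resolution}. To run it, state explicitly that $|g_E|^m\lesssim\pi^*\|\zeta\|$ near the compact set $E$ for some $m$, where $g_E$ is a local equation of $E$. This follows from R\"uckert's Nullstellensatz applied to the ideal generated by $\pi^*z_1,\dots,\pi^*z_N$, whose zero set is $E$. For $N\geq m$ one then gets $\pi_*\omega/\|\zeta\|\in L^2$ as in \eqref{eq:nice03}. The abstract fact that every $\OO_X$-submodule squeezed between two coherent sheaves with finite-dimensional skyscraper quotient is coherent then replaces the choice of stalk generators.

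Your route trades Theorem~\ref{thm:bounded} for an elementary cut-off estimate and gives an explicit resolution-theoretic lower bound for $\mathcal{K}_X^s$, in the spirit of \eqref{eq:divisor}. The paper's route needs neither a resolution nor a Nullstellensatz estimate at this step.

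You could also dispense with the resolution entirely. The computation of Section~\ref{ssec:cut-off} nowhere uses that the singularity is canonical or Gorenstein, so it shows $\mathfrak{m}\cdot\mathcal{K}_X\subset\mathcal{K}_X^s$ at every isolated singularity. Since $\mathcal{K}_X/\mathfrak{m}\mathcal{K}_X$ is already a finite-dimensional skyscraper, your sandwich argument applies directly.
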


The argument is as follows: $\mathcal{K}_X^s$ is an analytic subsheaf of the coherent sheaf $\mathcal{K}_X$.
So, we only have to show that $\mathcal{K}_X^s$ is finitely generated, and this is clear for regular points.
But, if $x_0\in X \subset \C^N$ is a singular point in $X$, where $X$ is locally embedded in some $\C^N$,
then a standard basis of holomorphic $n$-forms in $\C^N$, restricted to $X$, is $\dq_s$-closed
and generates $(\mathcal{K}_X^s)_w=(\mathcal{K}_X)_w$ in regular points $w\in X$ closeby.
This is elaborated in Section \ref{sec:coherence}.

\medskip
It remains to obtain a better understanding of $\mathcal{K}_X^s$.
Our second main result in the present paper is as follows:

\begin{thm}\label{thm:main01}
Let $X$ be a singular complex space and $x_0\in X$ an isolated canonical Gorenstein singularity.
Then, on a small neighborhood of $x_0$, we have either
\begin{eqnarray}\label{eq:type0}
\mathcal{K}_X^s \ =\  \mathcal{K}_X \ = \ \omega_X,
\end{eqnarray}
or
\begin{eqnarray}\label{eq:type1}
\mathcal{K}_X^s \ =\ \mathfrak{m} \otimes \mathcal{K}_X \ =\  \mathfrak{m} \otimes \omega_X,
\end{eqnarray}
where $\mathfrak{m}$ is the maximal ideal sheaf of $\OO_X$ at $x_0$.

Let $\pi: M \rightarrow X$ be a resolution of singularities such that the exceptional set is a hypersurface, 
and let $Z:=\pi^{-1}\big(\{x_0\}\big)$
be the unreduced exceptional divisor. In the first case, $\mathcal{K}_X^s=\pi_* \mathcal{K}_M$,
whereas in the second case, we have
\begin{eqnarray}\label{eq:type1b}
\mathcal{K}_X^s  &=&   \pi_* \big( \mathcal{K}_M \otimes \OO(-Z)\big) .
\end{eqnarray}
\end{thm}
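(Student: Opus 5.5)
The strategy is to squeeze $\mathcal{K}_X^s$ between $\mathfrak{m}\otimes\omega_X$ and $\omega_X$ and then observe that there is no room in between. Since $x_0$ is Gorenstein and canonical, $\mathcal{K}_X=\omega_X=\OO(K_X)$ is locally free of rank one near $x_0$. Fix a local generator $\eta$; then $\omega_X\cong\OO_X$ via $f\mapsto f\eta$. Under this identification $\mathcal{K}_X^s\subset\mathcal{K}_X$ becomes an ideal sheaf $\mathcal{I}\subset\OO_X$. Away from $x_0$ we have $\mathcal{K}_X^s=\mathcal{K}_X$, because the $\dq_s$-condition is only a condition at the singular set. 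Hence $\mathcal{I}$ is cosupported at $x_0$, and it is coherent by Theorem \ref{thm:main00}. The key step is to show $\mathfrak{m}\subset\mathcal{I}$, i.e. $\mathfrak{m}\,\omega_X\subset\mathcal{K}_X^s$. Granting that, $\mathcal{I}_{x_0}$ lies between $\mathfrak{m}_{x_0}$ and $\OO_{X,x_0}$. As $\mathfrak{m}_{x_0}$ is maximal, either $\mathcal{I}=\OO_X$, giving \eqref{eq:type0}, or $\mathcal{I}=\mathfrak{m}$, giving \eqref{eq:type1}. Since $\omega_X$ is locally free, $\mathfrak{m}\otimes\omega_X=\mathfrak{m}\,\omega_X$.

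To prove $\mathfrak{m}\,\omega_X\subset\mathcal{K}_X^s$, I would use the characterization \eqref{eq:divisor}, $\mathcal{K}_X^s=\pi_*(\mathcal{K}_M\otimes\OO(-D))$, from \cite{Duke}. More concretely, I would use the local description underlying it: an $L^2$ $n$-form $\phi$ lies in $\mathcal{K}^s$ iff $\pi^*\phi$ is $\dq$-closed across the exceptional set in the appropriate sense. For isolated singularities this amounts to vanishing to a certain order along the components of the exceptional divisor, and the order is at most one on each component of $Z$. Let $f\in\mathfrak{m}$. Then $\pi^*f$ vanishes on the whole exceptional set, so $\pi^*(f\eta)=\pi^*f\cdot\pi^*\eta$ vanishes along $Z_{red}$. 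In fact it lies in $\mathcal{K}_M\otimes\OO(-Z)$, because $\pi^*f\in\OO(-Z)$ by the definition of the unreduced divisor $Z$. So it suffices to show
\[
\pi_*\big(\mathcal{K}_M\otimes\OO(-Z)\big)\subset\mathcal{K}_X^s .
\]
For this, I would directly approximate $f\eta$ by forms with support away from $x_0$, using cut-offs $\chi_k(\pi^*)$ of logarithmic type on $M$. We need $\|\dq\chi_k\wedge f\eta\|_{L^2}\to0$. The vanishing of $\pi^*f$ along every component of $Z$ makes this work: in a model coordinate $|z_1|$, the term $|\dq\chi_k|\cdot|z_1|$ has $L^2$ norm going to zero even with an ordinary cut-off at scale $1/k$. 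This is the standard argument that forms vanishing on the exceptional divisor lie in the domain of $\dq_s$.

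For the identification with $\pi_*$ in the two cases, the first case is immediate from $\mathcal{K}_X=\pi_*\mathcal{K}_M$. In the second case I need $\mathfrak{m}\,\omega_X=\pi_*(\mathcal{K}_M\otimes\OO(-Z))$. The inclusion $\subset$ was shown above. For $\supset$: a section of the right-hand side is $\phi=g\eta$ with $g\in\OO_X$, using $\pi_*\mathcal{K}_M=\omega_X$, and $\pi^*g\,\pi^*\eta$ vanishes along $Z$. If $g(x_0)\neq0$, then $\phi$ generates $\omega_X$ near $x_0$, so $\pi^*\phi$ vanishes along $Z$ exactly as $\pi^*\eta$ does. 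That would force every section of $\pi_*\mathcal{K}_M$ to vanish along $Z$, i.e. $\pi_*\mathcal{K}_M=\pi_*(\mathcal{K}_M\otimes\OO(-Z))$. I would then check that this is contradictory: near the generic point of a component $E$ of $Z$, a function of $\mathfrak{m}$ vanishing to order exactly the multiplicity of $E$ in $Z$ shows the inclusion of $\pi_*(\mathcal{K}_M\otimes\OO(-Z))$ into $\pi_*\mathcal{K}_M$ is proper. That contradiction is plausible but needs care; alternatively one can simply use that the $\pi_*$-image equals $\mathfrak{m}\,\omega_X$ by the dichotomy argument applied to the ideal $\pi_*(\mathcal{K}_M(-Z))/\eta$, which contains $\mathfrak{m}$ and is not all of $\OO$ (since $\eta$ itself does not vanish on $Z$, or else $\mathcal{K}^s$ could never equal $\mathfrak{m}\mathcal{K}$, only in degenerate situations).

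The main obstacle I expect is the precise $L^2$ cut-off estimate showing $\mathfrak{m}\,\omega_X\subset\mathcal{K}_X^s$, since the measure near $x_0$ is the ambient one. I would first try to do it upstairs on $M$, where pullback metrics are degenerate but dominated by smooth ones. The other delicate point is that $\pi^*\eta$ does not vanish along $Z$, which uses canonicity (discrepancies $\ge0$ give $\pi^*\eta$ holomorphic) together with the second case being nontrivial.
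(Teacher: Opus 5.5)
Your skeleton matches the paper's. You view $\mathcal{K}_X^s$ as an ideal in $\OO_X\cong\omega_X$ containing $\mathfrak{m}$ and use maximality. You get $\mathfrak{m}\,\omega_X\subset\pi_*(\mathcal{K}_M\otimes\OO(-Z))$ from $\pi^*f\in\OO(-Z)$, and $\pi_*(\mathcal{K}_M\otimes\OO(-Z))\subset\mathcal{K}_X^s$ from a cut-off argument. The gap is this last inclusion. It is the only genuinely analytic step, and your proof of $\mathfrak{m}\,\omega_X\subset\mathcal{K}_X^s$ rests on it, but it is not established, and the sketch you give would not work as stated.

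Membership in $\Dom\dq_s$ requires $\dq\chi_k\wedge\phi\to0$ in $L^2$ on $X$, i.e.\ upstairs with respect to the degenerate metric $\gamma=\pi^*h$. Since $\gamma\lesssim\sigma$ on tangent vectors, you have $|\alpha|_\sigma\lesssim|\alpha|_\gamma$ for $(0,1)$-forms. So $L^2_\sigma$ is the weaker norm on $(n,1)$-forms (cf.\ \eqref{eq:l2est3}). An estimate with a cut-off in a local coordinate $z_1$ on $M$, measured in a smooth metric, therefore says nothing about the norm that matters. Your bookkeeping with $|\dq\chi_k|\cdot|z_1|$ shows this. For a cut-off pulled back from $X$, the relevant bound is $|\dq\mu_k|_X\lesssim 1/\|\zeta\|$. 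Near a generic point of a component of multiplicity $m_i$ in $Z$, this is of size $|z_1|^{-m_i}$, not $|z_1|^{-1}$. For the same reason, your assertion that the required vanishing order is at most one on each component of $Z$ is unsupported; you do not end up using it.

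The missing idea is to do the estimate downstairs. Since $Z$ is cut out by $\pi^*z_1,\dots,\pi^*z_N$, the function $\pi^*\|\zeta\|$ vanishes exactly to the order of $Z$. Hence for $\omega\in\mathcal{K}_M\otimes\OO(-Z)$ the quotient $\omega/\pi^*\|\zeta\|$ is bounded. For $(n,0)$-forms, $\int\phi\wedge\overline{\phi}$ is metric independent, and this gives $\pi_*\omega/\|\zeta\|\in L^2(X)$.

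Now take cut-offs $\mu_k$ depending only on $\|\zeta\|$, such as \eqref{eq:cutoff1}. Then $\|\zeta\|\,|\dq\mu_k|_X\le C$ uniformly in $k$, since restriction to $X$ only decreases the norm of the ambient form $\dq\mu_k$. Dominated convergence gives $\dq\mu_k\wedge\pi_*\omega\to0$ in $L^2$.

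For $\mathfrak{m}\,\omega_X\subset\mathcal{K}_X^s$ you do not need $M$ at all. For $f\in\mathfrak{m}$ you have $|f|\le C\|\zeta\|$, so $f\,\dq\mu_k$ is uniformly bounded, and $\eta\in L^2$ suffices. This is how the paper proceeds; the ambient measure is no obstacle.

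Your first argument for $\supset$ in the second case is also wrong. The equality $\pi_*\mathcal{K}_M=\pi_*(\mathcal{K}_M\otimes\OO(-Z))$ is not contradictory in general. For the node $xy=zw$ in $\C^4$ blown up at the origin, $Z=E$ is reduced and $\pi^*\eta$ vanishes to order one along $E$ (discrepancy $1$). The argument is also unnecessary. Once $\pi_*(\mathcal{K}_M\otimes\OO(-Z))\subset\mathcal{K}_X^s$ is known, the second case gives directly $\mathfrak{m}\,\omega_X\subset\pi_*(\mathcal{K}_M\otimes\OO(-Z))\subset\mathcal{K}_X^s=\mathfrak{m}\,\omega_X$. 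That is the paper's conclusion, and your alternative argument amounts to the same thing.
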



\bigskip
Note that this characterisation applies to a wide range of interesting isolated singularities.
First, locally complete intersections are Gorenstein. Second, the singularities that
appear in the search for minimal models are canonical.

Anyway, Theorem \ref{thm:main01} is not only interesting in its own right,
but has strong implications for the $L^2$-theory of the $\dq$-operator,
which refine the results from \cite{OV} and \cite{Duke}
considerably.
Let $\dq_w$ be the $\dq$-operator in the sense of distributions
on $L^2$-forms on the regular part of the variety (see Section \ref{sec:dq-complexes}).
The sheaves of germs of $(p,q)$-forms in the domain of $\dq_w$ are denoted by $\mathcal{C}^{p,q}$.
Then, the $\dq_w$-complex
\begin{eqnarray}\label{eq:intro2}
0\rightarrow \mathcal{O}_X \hookrightarrow \mathcal{C}^{0,0} \overset{\dq_w}{\longrightarrow}
\mathcal{C}^{0,1} \overset{\dq_w}{\longrightarrow} \mathcal{C}^{0,2} \overset{\dq_w}{\longrightarrow} ... 
\end{eqnarray}
is by no means exact in general (see, e.g., \cite{Rp7}, Theorem 1.2).
This will be used in the proof of Theorem \ref{thm:main04} below.

Let
\begin{eqnarray*}
 H^{0,q}_{w,loc}(X) &:=& H^q\big( \Gamma(X,\mathcal{C}^{0,*})\big)
\end{eqnarray*}
denote the $L^2$-$\dq_w$-cohomology.


\begin{thm}\label{thm:main02}
Let $X$ be a Hermitian complex space with only isolated canonical Gorenstein singularities,
$\pi: M\rightarrow X$ a resolution of singularities, and $\Omega\subset X$ an open set. 

If all the singularities in $\Omega$ are of the first kind \eqref{eq:type0} in Theorem \ref{thm:main01},
then 
pull-back of forms induces for any $q\geq0$ a natural isomorphism
\begin{eqnarray}\label{eq:main02a}
H^{n,q}_{s,loc}(\Omega)
\overset{\cong}{\longrightarrow} H^q\big(\pi^{-1}(\Omega),\mathcal{K}_M\big),
\end{eqnarray}
and push-forward of forms induces for any $q\geq0$ a natural isomorphism
\begin{eqnarray}\label{eq:main02b}
H^q\big(\pi^{-1}(\Omega),\OO_M\big) 
\overset{\cong}{\longrightarrow} H^{0,q}_{w,loc}(\Omega).
\end{eqnarray}
As canonical singularities are rational, it follows particularly that the $\dq_w$-$L^2$-complex \eqref{eq:intro2}
is exact in singularities of the first kind \eqref{eq:type0}.

If all the singularities in $\Omega$ are of the second kind \eqref{eq:type1} in Theorem \ref{thm:main01},
then there exist for any $q\geq0$
a natural isomorphism
\begin{eqnarray}\label{eq:main02c}
H^{n,q}_{s,loc}(\Omega)
\overset{\cong}{\longrightarrow} H^q\big(\pi^{-1}(\Omega),\mathcal{K}_M\otimes\mathcal{O}_M(-Z) \big),
\end{eqnarray}
and 
\begin{eqnarray}\label{eq:main02d}
H^q\big(\pi^{-1}(\Omega),\OO_M(Z)\big) 
\overset{\cong}{\longrightarrow} H^{0,q}_{w,loc}(\Omega),
\end{eqnarray}
where $Z=\pi^{-1}(\Sing X)$ is the unreduced exceptional divisor.
\end{thm}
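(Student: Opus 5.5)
The plan is to reduce everything to sheaf cohomology on $M$. For the $(n,q)$-statements I will use the fine resolution \eqref{eq:intro1} of $\mathcal{K}_X^s$, and for the $(0,q)$-statements I will use $L^2$-Serre duality between the $\dq_s$-complex in degree $(n,*)$ and the $\dq_w$-complex in degree $(0,*)$. Theorem \ref{thm:main01} tells us exactly which sheaf on $M$ is pushed forward, and the vanishing of higher direct images makes the Leray spectral sequence degenerate.

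\textbf{The $(n,q)$-isomorphisms \eqref{eq:main02a} and \eqref{eq:main02c}.} By \eqref{eq:intro1}, $H^{n,q}_{s,loc}(\Omega)\cong H^q(\Omega,\mathcal{K}_X^s)$. In the first case, Theorem \ref{thm:main01} gives $\mathcal{K}^s_X=\pi_*\mathcal{K}_M$. By Grauert--Riemenschneider vanishing, $R^q\pi_*\mathcal{K}_M=0$ for $q>0$, so the Leray spectral sequence gives $H^q(\Omega,\pi_*\mathcal{K}_M)\cong H^q(\pi^{-1}(\Omega),\mathcal{K}_M)$. I would check that this identification is induced by pull-back of forms. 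To do so, I would compare the $\dq_s$-resolution on $\Omega$ with the Dolbeault resolution on $M$, using that pull-back of $L^2$ $(n,q)$-forms preserves square-integrability.

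In the second case we have $\mathcal{K}_X^s=\pi_*(\mathcal{K}_M\otimes\OO(-Z))$, and we need $R^q\pi_*(\mathcal{K}_M\otimes\OO(-Z))=0$ for $q>0$. I would choose $\pi$ as in Theorem \ref{thm:main01}, so that $-Z$ is $\pi$-ample up to an exceptional correction. More concretely, I would take $\pi$ to factor through the blow-up of $\mathfrak{m}$, so that $\mathfrak{m}\OO_M=\OO_M(-Z)$ and $-Z$ is $\pi$-nef and big. Relative Kawamata--Viehweg vanishing then gives the claim. For a general resolution, I would argue that the resulting cohomology $H^q(\pi^{-1}(\Omega),\mathcal{K}_M(-Z))$ is independent of the chosen resolution, using $R\pi_*$-invariance under further blow-ups. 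I expect this vanishing for $\mathcal{K}_M(-Z)$ to be the main obstacle, together with the dependence on the choice of $\pi$.

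\textbf{The $(0,q)$-isomorphisms \eqref{eq:main02b} and \eqref{eq:main02d}.} I would use the $L^2$-duality between $\dq_w$ on $(0,q)$-forms and $\dq_s$ on $(n,n-q)$-forms, as in \cite{OV}, \cite{Duke}, applied on a Stein neighbourhood of each singular point. Since $H^{0,q}_{w,loc}$ is determined by local data at the isolated singularities, it suffices to compute the local cohomology groups of the $\dq_w$-complex and show that they agree with $(R^q\pi_*\OO_M)_{x_0}$, respectively $(R^q\pi_*\OO_M(Z))_{x_0}$. Serre duality on $M$ (for compact supports versus cohomology on strongly pseudoconvex neighbourhoods) pairs $\mathcal{K}_M$ with $\OO_M$, and $\mathcal{K}_M(-Z)$ with $\OO_M(Z)$.

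Concretely, I would:
\begin{itemize}
\item[(i)] identify the $\dq_s$-cohomology with compact support with $H^q_{cpt}(\pi^{-1}(\Omega),\mathcal{K}_M(-D))$, where $D=0$ or $D=Z$;
\item[(ii)] apply the $L^2$-duality $H^{0,q}_{w}\leftrightarrow H^{n,n-q}_{s,cpt}$ in the Hausdorff situation, which holds here since the local cohomology groups are finite-dimensional;
\item[(iii)] apply Serre duality on $M$ to land in $H^q(\pi^{-1}(\Omega),\OO_M(D))$.
\end{itemize}
Checking that the composite is induced by push-forward of forms is a matter of tracking the pairings. In the first case, exactness of \eqref{eq:intro2} follows because canonical singularities are rational, so $R^q\pi_*\OO_M=0$ for $q>0$.
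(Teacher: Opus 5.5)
Your proposal is correct and is essentially the paper's proof. For the $(n,q)$-statements you both use Leray together with the vanishing of the higher direct images of $\mathcal{K}_M$, resp.\ $\mathcal{K}_M\otimes\OO(-Z)$; the paper checks that pull-back is a morphism $(\mathcal{F}^{n,*},\dq_s)\to(\pi_*\mathcal{C}^{n,*}_\sigma,\pi_*\dq_w)$ by invoking the $L^2$-extension theorem across $E$. For the $(0,q)$-statements you both play $L^2$-Serre duality on $X$ off against classical Serre duality on strongly pseudoconvex $\pi^{-1}(\Omega)$, with Grauert finiteness supplying the Hausdorff property, and then pass from local to global (the paper by Mayer--Vietoris and the five lemma, you by stalkwise localisation).

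The one real variation is the vanishing $R^q\pi_*(\mathcal{K}_M\otimes\OO(-Z))=0$: you get it from relative Kawamata--Viehweg on a resolution through the blow-up of $\mathfrak{m}$, while the paper applies Takegoshi's theorem to the semipositive weight $\log\sum_j|\pi^*z_j/g|^2$. Both rest on $\mathfrak{m}\OO_M=\OO_M(-Z)$, which the paper uses tacitly and you make explicit. Note that your invariance-under-blow-ups argument is only justified among resolutions with this property, since only there does $Z$ pull back.
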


Note that the cohomology groups $H^{n,q}_{w,loc}(\Omega)$ and $H^{0,q}_{s,loc}(\Omega)$, respectively,
have been expressed in terms of a resolution of singularities $\pi: M \rightarrow X$
already in \cite{Duke} and \cite{Serre}, namely
\begin{eqnarray*}
H^{n,q}_{w,loc}(\Omega) & \cong & H^q\big(\pi^{-1}(\Omega),\mathcal{K}_M\big) ,\\
H^{0,q}_{s,loc}(\Omega) &\cong & H^q\big(\pi^{-1}(\Omega),\OO_M\big) ,
\end{eqnarray*}
where we assume that $\Omega$ is holomorphically convex for the second statement (see \cite{Serre}, Theorem 1.1),
whereas the first holds (analogously to the statement of Theorem \ref{thm:main02}) even without that condition.

Hence, Theorem \ref{thm:main02} completes our understanding of the $L^2$-cohomology for the $\dq$-operator
on the level of $(0,q)$- and $(n,q)$-forms.

\bigskip
As a second application of Theorem \ref{thm:main01},
we obtain the following vanishing theorem of Kodaira--Grauert--Riemenschneider--Takegoshi type:

\begin{thm}\label{thm:main03}
Let $N$ be a compact K\"ahler manifold.
Let $X \subset N$ be an irreducible subvariety with only isolated canonical Gorenstein singularities
of the first kind (in Theorem \ref{thm:main01}),
$L \rightarrow X$ an almost positive holomorphic line bundle, and let $q>0$.
Then
\begin{eqnarray*}
H^q\big(X, \mathcal{K}_X^s (L) \big) \ = \ H^{n,q}_s(X,L) \ = \ H^{0,n-q}_w(X,L^*) \ = \ 0.
\end{eqnarray*}

\end{thm}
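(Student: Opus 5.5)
In the first-kind case, Theorem \ref{thm:main01} gives $\mathcal{K}_X^s=\mathcal{K}_X=\pi_*\mathcal{K}_M$ near every singular point. So the plan is to push the vanishing question for $\mathcal{K}_X^s(L)$ onto the resolution and invoke Grauert--Riemenschneider/Takegoshi vanishing there. The two cohomological identifications come from earlier material. The equality $H^q(X,\mathcal{K}_X^s(L))=H^{n,q}_s(X,L)$ follows from the fine resolution \eqref{eq:intro1}, twisted by the locally free sheaf $\OO(L)$; since $X$ is compact, $\Gamma=\Gamma_{cpt}$. The equality $H^{n,q}_s(X,L)=H^{0,n-q}_w(X,L^*)$ is the $L^2$-Serre duality between the $\dq_s$- and $\dq_w$-complexes. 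This duality holds because $\dq_s$ and $\dq_w$ are mutually adjoint, up to the Hodge star, on the compact space $X$. One also needs closed range, which follows from finite dimensionality: coherence (Theorem \ref{thm:main00}) makes $H^q(X,\mathcal{K}^s_X(L))$ finite dimensional. So the real content is the vanishing $H^q(X,\mathcal{K}_X^s(L))=0$ for $q>0$.

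\textbf{Step 1.} Take a resolution $\pi:M\to X$. It can be chosen as an embedded resolution inside a blow-up of $N$, so that $M$ is again a projective-over-K\"ahler, hence K\"ahler, manifold. By Theorem \ref{thm:main01}, first case, $\mathcal{K}_X^s(L)=\pi_*\big(\mathcal{K}_M\otimes\pi^*L\big)$. Takegoshi's relative vanishing theorem (Grauert--Riemenschneider for the proper map $\pi$ from a K\"ahler manifold) gives $R^p\pi_*(\mathcal{K}_M\otimes\pi^*L)=0$ for $p>0$. The Leray spectral sequence therefore degenerates, and
\[
H^q\big(X,\mathcal{K}_X^s(L)\big)\cong H^q\big(M,\mathcal{K}_M\otimes\pi^*L\big).
\]

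\textbf{Step 2.} Because $L$ is almost positive, $\pi^*L$ is semipositive on $M$ and strictly positive on a dense open set; I take this to be the meaning of ``almost positive''. Thus $\pi^*L$ is nef and big on the compact K\"ahler manifold $M$. The Kawamata--Viehweg-type vanishing theorem (or Takegoshi's absolute version, or Demailly's $L^2$-estimates with a singular metric) then yields $H^q(M,\mathcal{K}_M\otimes\pi^*L)=0$ for $q>0$. Combined with Step 1 and the two identifications above, this proves the theorem.

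\textbf{Main obstacle.} I expect the hardest step to be the duality $H^{n,q}_s(X,L)\cong H^{0,n-q}_w(X,L^*)$. It needs the closed-range property of $\dq_w$ and $\dq_s$ on the compact Hermitian space $X$, plus the identification of their Hilbert adjoints. I would try first to derive closed range from finite dimensionality of $H^{n,q}_s$, which holds since $\mathcal{K}^s_X$ is coherent and $X$ compact. That gives closed range of $\dq_s$ in all degrees. Closed range of $\dq_w=(\dq_s)^*$, up to $\star$, then follows from Banach's closed range theorem, and the duality follows. A lesser point to check is that the resolution $\pi$ can be taken K\"ahler, so that Takegoshi's theorems apply.
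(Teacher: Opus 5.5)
Your proposal is correct and follows essentially the paper's route. You use Theorem \ref{thm:main01} (first kind) together with Takegoshi/Grauert--Riemenschneider vanishing and the Leray spectral sequence on a K\"ahler embedded resolution to get $H^q(X,\mathcal{K}_X^s(L))\cong H^q(M,\mathcal{K}_M(\pi^*L))$; this group then vanishes for the almost positive bundle $\pi^*L$, and $L^2$-Serre duality \cite{Serre} gives the $\dq_w$-statement. The deviations are minor: you invoke nef-and-big Kawamata--Viehweg where the paper uses Theorem \ref{thm:NK}, and you sketch the closed-range argument behind the duality that the paper simply cites. Note also that Kawamata--Viehweg only needs positivity on some open set, so your stronger ``dense'' reading of almost positive is unnecessary.
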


Here, $\mathcal{K}_X^s(L)$ denotes the sheaf of $\dq_s$-closed $(n,0)$-forms with values in $L$,
$H^{n,q}_s(X,L)$ is the global cohomology of the $\dq_s$-$L^2$-complex with values in $L$,
and $H^{0,n-q}_w(X,L^*)$ the global cohomology of the $\dq_w$-$L^2$-complex with values in the
dual bundle $L^*$.

Recall that a holomorphic line bundle $L$ on an irreducible (compact) complex space is called {\it almost positive}
if there exists a Hermitian metric on $L$ whose curvature is semipositive everywhere and positive on some open set.
So, Theorem \ref{thm:main03} applies particularly to positive line bundles.

\bigskip

We conclude this paper by characterising canonical singularities in dimension two (they are always Gorenstein)
with respect to the distinction in Theorem \ref{thm:main01}.
In dimension two, i.e., on a complex surface, a singularity is canonical if and only if it is a rational double point.
Such points
are well-studied and have been classified a long time ago as the so-called du~Val singularities,
see, e.g.,  the survey \cite{Du}. The possible singularities are of type $A_n$, $n\geq 1$, $D_n$, $n\geq 4$, 
or of exceptional type $E_6$, $E_7$ or $E_8$, and can be realized as isolated hypersurface singularities in $\C^3$.

\begin{thm}\label{thm:main04}
Let $x_0$ be a canonical surface singularity.
If $x_0$ is an $A_n$-singularity, then $x_0$ is of the first kind \eqref{eq:type0} in Theorem \ref{thm:main01}.
Otherwise, i.e., if $x_0$ is of type $D_n$, $n\geq 4$, $E_6$, $E_7$ or $E_8$, then
$x_0$ is of the second kind \eqref{eq:type1} in Theorem \ref{thm:main01}.
\end{thm}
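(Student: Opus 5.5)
The plan is to decide the dichotomy of Theorem \ref{thm:main01} through a cohomological invariant that can be computed on the two sides: the local $L^2$-$\dq_w$-cohomology $H^{0,1}_{w,loc}$ at $x_0$. Let $U$ be a small Stein neighbourhood of $x_0$ and $\pi: M\to X$ the minimal resolution. Since $x_0$ is a rational double point, $Z$ is the fundamental cycle, with $Z^2=-2$ and $p_a(Z)=0$.

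\textbf{Step 1: a criterion for the kind of $x_0$.} By Theorem \ref{thm:main02}:
\begin{itemize}
\item if $x_0$ is of the first kind, then $H^{0,1}_{w,loc}(U)\cong H^1(\pi^{-1}(U),\OO_M)=0$, because canonical singularities are rational;
\item if $x_0$ is of the second kind, then $H^{0,1}_{w,loc}(U)\cong H^1(\pi^{-1}(U),\OO_M(Z))$.
\end{itemize}
I will show the latter group is nonzero, using the exact sequence $0\to\OO_M\to\OO_M(Z)\to\OO_Z(Z)\to0$.
\begin{itemize}
\item On $\pi^{-1}(U)$ we have $H^1(\OO_M)=0$ by rationality and $H^2(\OO_M)=0$, since the fibres of $\pi$ have dimension at most one. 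Hence $H^1(\pi^{-1}(U),\OO_M(Z))\cong H^1(Z,\OO_Z(Z))$.
\item Riemann--Roch on the curve $Z$ gives $\chi(\OO_Z(Z))=Z^2+\chi(\OO_Z)=-2+1=-1$. Therefore $h^1(\OO_Z(Z))\geq 1$.
\end{itemize}
So the criterion is: $x_0$ is of the first kind if and only if the $\dq_w$-complex \eqref{eq:intro2} is exact in degree $1$ at $x_0$. This reduces Theorem \ref{thm:main04} to an $L^2$-solvability question for $\dq_w$ on $(0,1)$-forms near the singular point, for the metric induced by the embedding $X\subset\C^3$.

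\textbf{Step 2: the $A_n$ case.} I will realise $A_n$ as $\C^2/\mathbb{Z}_{n+1}$, where $\zeta$ acts by $(x,y)\mapsto(\zeta x,\zeta^{-1}y)$. The quotient map is $\phi(x,y)=(x^{n+1},y^{n+1},xy)$ onto $\{uv=w^{n+1}\}$.
\begin{itemize}
\item A $\dq_w$-closed $L^2$ $(0,1)$-form $f$ on $U\setminus\{x_0\}$ pulls back to an invariant $\dq$-closed form $\phi^*f$ on a punctured ball.
\item I will compare the $L^2$-norm of $f$ with a weighted $L^2$-norm of $\phi^*f$. The weight comes from $|d\phi|$ and the pulled-back metric, and is governed by the invariant $xy$ of degree $2$, which keeps the distortion mild.
\item I will then solve $\dq g=\phi^*f$ with a weighted Hörmander estimate on the ball. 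The point $0$ is removable for $L^2$ $\dq$-closed forms in codimension $2$.
\item Averaging over the group gives an invariant solution $g$, which descends to an $L^2$ solution on $X$.
\end{itemize}
This yields exactness in degree $1$, so by Step 1 the $A_n$ singularities are of the first kind.

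\textbf{Step 3: the $D_n$, $E_6$, $E_7$, $E_8$ cases.} Here I will show non-exactness, using the result announced after \eqref{eq:intro2} (\cite{Rp7}, Theorem 1.2). It should produce a $\dq_w$-closed $L^2$ $(0,1)$-form that is not $\dq_w$-exact, for hypersurface or quotient singularities whose defining weights (equivalently, the invariant degrees of the binary dihedral and polyhedral groups) exceed the $A_n$ threshold. For these groups all generators of the invariant ring have degree at least $3$ or $4$. Consequently the metric distortion of $\phi$ at $0$ is strong enough that a form such as $\phi_*$ of a suitable invariant homogeneous $(0,1)$-form is $L^2$ on $X$, while every potential fails to be $L^2$.

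\textbf{Main obstacle.} The main difficulty is the precise metric comparison in Steps 2–3, namely identifying exactly which homogeneity degrees make the obstruction appear. I would first try to check that the hypotheses of \cite{Rp7} are met by the normal forms $x^2+y^2z+z^{n-1}$, $x^2+y^3+z^4$, $x^2+y^3+yz^3$, $x^2+y^3+z^5$. As a fallback, I would test the candidate class in $H^1(Z,\OO_Z(Z))$ directly by constructing its Dolbeault representative and verifying the $L^2$ bounds.
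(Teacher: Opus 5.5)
Your Step~1 is correct, and it gives more than the paper uses. Combining \eqref{eq:main02b}, \eqref{eq:main02d}, rationality and $\chi(\OO_Z(Z))=Z^2+\chi(\OO_Z)=-1$, a rational double point is of the first kind if and only if \eqref{eq:intro2} is exact at $\mathcal{C}^{0,1}$ in $x_0$; the paper only needs ``first kind $\Rightarrow$ exact''. The gap is that neither analytic fact that actually decides the classification is established. For $D_n$, $E_6$, $E_7$, $E_8$ you need a genuine $L^2$-obstruction to solving $\dq_w$ on $(0,1)$-forms at $x_0$, and \cite{Rp7} does not supply it. That reference concerns homogeneous varieties (cones), whereas the normal forms you list are only weighted homogeneous; a cone over a smooth curve has multiplicity equal to its degree, so among rational double points only $A_1$ is a cone. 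Your ``metric distortion'' paragraph is a heuristic and does not rule out \emph{all} $L^2$-potentials. The fallback does not help either. Checking that a pushed-down Dolbeault representative of a class in $H^1(Z,\OO_Z(Z))$ is $L^2$ only shows it is $\dq_w$-closed. Its non-exactness on $X$ is the injectivity in \eqref{eq:main02d}, which you may use only once $x_0$ is known to be of the second kind: at first-kind points $H^{0,1}_{w,loc}$ vanishes although $H^1(Z,\OO_Z(Z))\neq 0$. The missing input is exactly what the paper imports from \cite{Slarw}, Theorem~5.6 (for $D_4$ already \cite{P1}, Proposition~4.13). With that citation, your Step~1 plus rationality is precisely the paper's proof by contradiction.

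For $A_n$, Step~2 is far harder than necessary, and its key estimate is not available as stated. The pull-back $\phi^*f$ of an $L^2$ $(0,1)$-form is only square-integrable for the degenerate pulled-back metric (already $\phi^*dV_X\sim\|(s,t)\|^{2n+2}\,dV$). In general $\phi^*f\notin L^2_{loc}$ near $0$; for instance take $f=\dq_w\|\zeta\|^{-b}$ with $\tfrac12\le b<1$. Plurisubharmonic weights are locally bounded above, so H\"ormander's estimate cannot be applied to such data, and removability of the origin is not automatic either. The paper solves no $\dq$-equation here. Since $\mathcal{K}_X=\omega_X\cong\OO_X$ is generated by the Poincar\'e residue $\omega=dx\wedge dy/((n+1)z^n)$, and $\mathcal{K}_X^s\subset\mathcal{K}_X$ is an $\OO_X$-submodule, it suffices to show $\omega\in\Dom\dq_s$. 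With the cut-offs \eqref{eq:cutoff1} and $\phi^*\omega=(n+1)\,ds\wedge dt$, this reduces to a bound, uniform in $k$, for
\[
\int_{\phi^{-1}(D_k)}\frac{dV(s,t)}{\|\phi\|^2\log^2\|\phi\|},\qquad \|\phi\|^2=|s|^{2n+2}+|t|^{2n+2}+|st|^2,
\]
where $D_k=\{e^{-e^{k+1}}<\|\zeta\|<e^{-e^k}\}$. This is an elementary computation, and the term $|st|^2$ coming from the degree-$2$ invariant is exactly what keeps the integral bounded. So your intuition about that invariant is right, but it enters far more simply this way.
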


This shows particularly that both kinds of singularities do exist, actually already in dimension two.
To prove that the $A_n$-singularities are of the first kind, we will compute explicitly
that the generator of $\mathcal{K}_X$, the so-called {\it structure form} or {\it Poincar\'e residue},
is in the domain of $\dq_s$. This shows that $\mathcal{K}_X \subset \mathcal{K}_X^s$,
and the other inclusion is clear by definition.

For the proof of the fact that the other canonical surface singularities are of the second kind,
we make use of the observation that the $\dq_w$-$L^2$-complex \eqref{eq:intro2}
is exact in singularities of the first kind (see Theorem \ref{thm:main02}).
But \eqref{eq:intro2} is actually not exact at $\mathcal{C}^{0,1}$ 
in singularities of type $D_n$, $n\geq 4$, $E_6$, $E_7$ and $E_8$
as we have shown in \cite{Slarw}, Theorem 5.6.
The fact that there exist obstructions to solving $\dq_w$ in $L^2$ at the $D_4$-singularity was
known already before, due to Pardon \cite{P1}, Proposition 4.13.

Let us mention an interesting fact in the context of Theorem \ref{thm:main04}.
In the case of surface singularities, \eqref{eq:divisor} can be replaced by the more precise
observation that
\begin{eqnarray*}
\mathcal{K}_X^s &=& \pi_* \big( \mathcal{K}_M \otimes \OO(|Z|-Z)\big) ,
\end{eqnarray*}
where $Z$ is the unreduced exceptional divisor (see \cite{Duke}, Theorem 1.13). 
The $A_n$-singularities actually have a resolution with so-called reduced fundamental cycle,
$Z=|Z|$. This would give another proof of the fact that these singularities are of the first kind.
On the other hand, for the singularities of type $D_n$, $n\geq 4$, $E_6$, $E_7$, and $E_8$,
we see that there can not exist a resolution with reduced fundamental cycle,
and we deduce
\begin{eqnarray*}
\mathcal{K}_X^s &=& \pi_* \big( \mathcal{K}_M \otimes \OO(|Z|-Z)\big) \ \ = \ \  \pi_* \big( \mathcal{K}_M \otimes \OO(-Z)\big).
\end{eqnarray*}

\bigskip
The present paper is organized as follows. In Section \ref{sec:dq-complexes},
we recall the definition of the $\dq_w$- and the $\dq_s$-operator, and show that
bounded forms in the domain of $\dq_w$ are also in the domain of $\dq_s$.
Section \ref{sec:coherence} is devoted to the concise proof of the fact that $\mathcal{K}_X^s$
is coherent, and we prove Theorem \ref{thm:main01} in Section \ref{thm:cGs}.
In Section \ref{sec:smooth}, we show how the $L^2$-cohomology of $X$ can be represented by a smooth
model in terms of a resolution of singularities.
Section \ref{sec:vanishing} contains the proof of Theorem \ref{thm:main03}.
Finally, in the last section, we characterise rational double points in terms of $\mathcal{K}_X^s$.

\newpage

\section{Two $\dq$-complexes on singular complex spaces}\label{sec:dq-complexes}

Let us recall shortly some essential notation from \cite{Duke}, \cite{Serre} and \cite{Toulouse},
which is used analogously in \cite{Slarw}, and which appears implicitly also in \cite{OV}.
Let $(X,h)$ always be a (singular) Hermitian complex space of pure dimension $n$.
A Hermitian complex space $(X,h)$ is a reduced complex space $X$ with a metric $h$ on the regular part such that the following holds:
If $x\in X$ is an arbitrary point, then there exists a neighborhood $U=U(x)$ and a
biholomorphic embedding of $U$ into a domain $G$ in $\C^N$ and an ordinary smooth Hermitian metric in $G$
whose restriction to $U$ is $h|_U$.

\subsection{The $\dq_w$-$L^2$-complex}

It is pretty fruitful to consider the following concept of locally square-integrable forms on $X$.
For any open set $U\subset X$ we define
\begin{eqnarray*}
L_{loc}^{p,q}(U):=\{f \in L_{loc}^{p,q}(U-\Sing X): f|_K \in L^{p,q}_h(K-\Sing X)\ \forall K\subset\subset U\},
\end{eqnarray*}
where $L^{p,q}_h(K- \Sing X)$ are the square-integrable $(p,q)$-forms with respect to the metric $h$
on the smooth manifold $K-\Sing X$. Now
$$\mathcal{L}^{p,q}(U) := L_{loc}^{p,q}(U)$$
defines the sheaf $\mathcal{L}^{p,q}\rightarrow X$ of (locally) square-integrable $(p,q)$-forms on $X$.

On $L_{loc}^{p,q}(U)$, we denote by
$$\dq_w: L_{loc}^{p,q}(U) \rightarrow L_{loc}^{p,q+1}(U)$$
the $\dq$-operator in the sense of distributions on $U-\Sing X$ which is closed and densely defined.
The subscript refers to $\dq_w$ as an operator in a weak sense.
By
$$\mathcal{C}^{p,q}(U) := \mathcal{L}^{p,q}(U) \cap\Dom\dq_w$$
we obtain the sheaves of $L^2$-forms in the domain of $\dq_w$.
It is easy to see that these admit partitions of unity,
and so we obtain for any $0\leq p \leq n$ a fine sequence
\begin{eqnarray}\label{eq:Cseq1}
\mathcal{C}^{p,0} \overset{\dq_w}{\longrightarrow} \mathcal{C}^{p,1} \overset{\dq_w}{\longrightarrow} \mathcal{C}^{p,2} \overset{\dq_w}{\longrightarrow} ...
\end{eqnarray}
Of particular importance is the case $p=n=\dim X$, in which
\begin{eqnarray*}
\mathcal{K}_X := \ker \dq_w \subset \mathcal{C}^{n,0}
\end{eqnarray*}
is just the canonical sheaf of Grauert and Riemenschneider introduced in \cite{GR},
since the $L^2$-property of $(n,0)$-forms remains invariant under modifications (cf. \cite{Duke}).

The $L^{2,loc}$-Dolbeault cohomology with respect to the $\dq_w$-operator on an open set $U\subset X$
is the cohomology of the complex \eqref{eq:Cseq1}, which is denoted by
\begin{eqnarray*}
H^{p,q}_{w,loc}(U) &:=& H^q\big(\Gamma(U,\mathcal{C}^{p,*})\big),
\end{eqnarray*}
and the cohomology with compact support is
\begin{eqnarray*}
H^{p,q}_{w,cpt}(U) &:=& H^q\big(\Gamma_{cpt}(U,\mathcal{C}^{p,*})\big).
\end{eqnarray*}
We will use the analogous notation, i.e., $\big(\mathcal{C}_M^{p,*}, \dq_w\big)$, to denote the usual
$\dq_w$-complex of $L^2$-forms in the domain of $\dq_w$ on a complex manifold $M$
(this follows with the same definitions, the singular set is simply empty).

\bigskip

\subsection{The $\dq_s$-$L^2$-complex}

In \cite{Duke}, we also introduced a suitable local realization of a minimal closed extension of the $\dq$-operator.
This is the $\dq$-operator with a kind of Dirichlet boundary condition at the singular set $\Sing X$ of $X$.
Let
$$\dq_s(U): L_{loc}^{p,q}(U) \rightarrow L_{loc}^{p,q+1}(U)$$
be defined as follows. We say that $f\in\Dom\dq_w$ is in the domain of $\dq_s$ if there exists a
sequence of forms $\{f_j\}_j \subset \Dom\dq_w \subset L_{loc}^{p,q}(U)$ with essential support away from the singular set in $U$,
$$\supp f_j \cap \Sing U = \emptyset,$$
such that
\begin{eqnarray}\label{eq:ds1}
f_j \rightarrow f &\mbox{ in }& L^{p,q}_h(K-\Sing X),\\
\dq_w f_j \rightarrow \dq_w f &\mbox{ in }& L^{p,q+1}_h(K-\Sing X)\label{eq:ds2}
\end{eqnarray}
for any compact subset $K\subset\subset U$. The condition is empty if there are no singularities in $U$.
The subscript refers to $\dq_s$ as an extension in a strong sense.
It is closed and densely defined.
Note that we can assume without loss of generality
(by use of a partition of unity and smoothing with Dirac sequences) that the forms $f_j$ are smooth 
on relatively compact subsets of $U$.
The reason is that the $f_j$ have support away from the singular set.

We obtain now by
$$\mathcal{F}^{p,q}(U) := \mathcal{L}^{p,q}(U) \cap \Dom \dq_s(U).$$
the sheaves of $L^{2,loc}$-forms in the domain of $\dq_s$.
As for $\mathcal{C}^{p,q}$, it is again not hard to see that the sheaves $\mathcal{F}^{p,q}$ are fine,
and so there is for any $0\leq p \leq n$ a fine complex
\begin{eqnarray}\label{eq:Fseq1}
\mathcal{F}^{p,0} \overset{\dq_s}{\longrightarrow} \mathcal{F}^{p,1} \overset{\dq_s}{\longrightarrow} \mathcal{F}^{p,2} \overset{\dq_s}{\longrightarrow} ...
\end{eqnarray}
We can now introduce the sheaf
\begin{eqnarray}\label{eq:KXs}
\mathcal{K}^s_X := \ker \dq_s \subset \mathcal{F}^{n,0}
\end{eqnarray}
which we may call the {\it canonical sheaf of holomorphic $n$-forms with Dirichlet boundary condition}.
The main objective of the present paper is to study $\mathcal{K}_X^s$ at isolated singularities. 
We will in particular develop nice representations in terms of a resolution of singularities
for $\mathcal{K}_X^s$ and its cohomology.

The $L^{2,loc}$-Dolbeault cohomology with respect to the $\dq_s$-operator on an open subset $U\subset X$
is the cohomology of the complex \eqref{eq:Fseq1} which is denoted by
\begin{eqnarray*}
H^{p,q}_{s,loc}(U) &:=& H^q\big(\Gamma(U,\mathcal{F}^{p,*})\big),
\end{eqnarray*}
and the cohomology with compact support is
\begin{eqnarray*}
H^{p,q}_{s,cpt}(U) &:=& H^q\big(\Gamma_{cpt}(U,\mathcal{F}^{p,*})\big).
\end{eqnarray*}

\subsection{On the domain of the $\dq_s$-operator}\label{ssec:domaindqs}

In general, it is difficult to decide whether a differential form is in the domain of the $\dq_s$-operator or not.
However, we have at least the following very useful criterion:

\begin{thm}\label{thm:bounded}
Locally bounded forms in the domain of the $\dq_w$-operator are also in the domain of the $\dq_s$-operator.
\end{thm}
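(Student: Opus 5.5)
The plan is a cut-off argument. Since $\Sing X$ is a complex analytic subset of real codimension at least two, we can approximate a locally bounded form $f\in\Dom\dq_w$ by $f_j := \chi_j f$. Here $\chi_j$ are cut-off functions that vanish near $\Sing X$, equal $1$ away from a shrinking neighbourhood of $\Sing X$, and satisfy $\|\dq\chi_j\|_{L^2(K)}\to 0$ for every $K\subset\subset U$. Then $f_j$ has support away from the singular set, $f_j\in\Dom\dq_w$, and
\begin{eqnarray*}
\dq_w f_j &=& \chi_j \dq_w f + \dq\chi_j\wedge f .
\end{eqnarray*}
By dominated convergence, $f_j\to f$ and $\chi_j\dq_w f\to \dq_w f$ in $L^2(K-\Sing X)$. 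The error term satisfies $\|\dq\chi_j\wedge f\|_{L^2(K)}\leq \|f\|_{L^\infty(K)}\,\|\dq\chi_j\|_{L^2(K)}$. This is exactly where boundedness of $f$ enters, and it reduces everything to the existence of such cut-offs. The whole question is local, so we may work in a neighbourhood $U$ of a point of $X$ embedded in a domain $G\subset\C^N$ with the metric $h$ comparable to the restriction of the Euclidean one.

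To construct the cut-offs, I would take a logarithmic cut-off in the distance to $\Sing X$, in the spirit of the classical proof that sets of capacity zero are removable. Choose a holomorphic map $g=(g_1,\dots,g_m)$ on $G$ whose common zero set on $X\cap G$ is $\Sing X\cap G$ (possible locally), and set $r=|g|^2$. Let $\rho_k:\R\to[0,1]$ be smooth with $\rho_k(t)=0$ for $t\le -k-1$ (say) and $\rho_k(t)=1$ for $t\ge -k$. Put
\begin{eqnarray*}
\chi_k &:=& \rho\Big(\frac{\log(-\log r)}{\dots}\Big)\quad\mbox{or simply}\quad \chi_k:=\rho_k\big(\log(-\log r)\big)\ \ \mbox{or}\ \ \rho_k(\log r)/k\mbox{-type profiles}.
\end{eqnarray*}
The standard choice is $\chi_k(z)=\eta\big(\tfrac{1}{k}\log(-\log r)\big)$ with $\eta$ a fixed smooth step function.

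Whichever profile is used, the estimate we need is that $|\dq\chi_k|^2$ is bounded by $\frac{C}{k^2}\,\frac{|\partial r|^2}{r^2\log^2 r}$ on a shell and vanishes elsewhere. Then we need the local integrability over $X$ of
\begin{eqnarray*}
\frac{|\partial r|^2}{r^2\log^2 r}\;\lesssim\;\frac{1}{|g|^2\log^2|g|}.
\end{eqnarray*}

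The main obstacle is this integrability on the singular space. It holds if $\Sing X$ has real codimension $\ge 2$ in $X$ in a volume-growth sense: the volume of $\{x\in X\cap K: |g(x)|<\varepsilon\}$ should be $O(\varepsilon^2)$. Then a dyadic decomposition gives $\sum_j \frac{1}{j^2}<\infty$ for the weighted integral. Once integrability holds, dominated convergence yields $\|\dq\chi_k\|_{L^2(K)}\to 0$, because the shells shrink to $\Sing X$ and the integral is multiplied by $1/k^2$.

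I would verify the volume estimate via a resolution $\pi:M\to X$ in which $\pi^*g$ generates the ideal of a normal crossing divisor. Equivalently, I would use that the squared norm of $g$ along $X$ is comparable to a power of the distance to an analytic set of complex codimension $\ge 1$ in $X$, and invoke Lojasiewicz-type inequalities and the standard fact that analytic sets of complex dimension $\le n-1$ have $(2n-2)$-dimensional local Hausdorff measure finite. This gives tube-volume bounds $O(\varepsilon^{2})$ after reparametrising by $|g|$.

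Finally, on a compact $K$ we combine these estimates to conclude that $f_j\to f$ and $\dq_w f_j\to\dq_w f$ in $L^2(K-\Sing X)$. This is precisely the definition of $f\in\Dom\dq_s$.
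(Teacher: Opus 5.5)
\textbf{Gap in the construction of the cut-offs.} Your reduction is correct and matches the paper's final step. Once you have cut-offs with $\|\dq\chi_k\|_{L^2(K\cap X)}\to 0$, the Leibniz rule and $\|f\|_{L^\infty}$ finish the proof.

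The gap is in proving that such cut-offs exist. You discard the gradient via $\frac{|\partial r|^2}{r^2\log^2 r}\lesssim \frac{1}{|g|^2\log^2|g|}$. This forces you to need $\mathrm{vol}\{x\in X\cap K: |g(x)|<\varepsilon\}=O(\varepsilon^2)$, and that is false in general for holomorphic $g$. Łojasiewicz only gives $|g|\ge c\, d(\cdot,\Sing X)^\alpha$, hence tubes of volume $O(\varepsilon^{2/\alpha})$, and $\alpha>1$ does occur. Finiteness of the Hausdorff measure of $\Sing X$ controls tubes measured in $d$, not in $|g|$.

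Here is a concrete counterexample. Let $X=\{z^2=(y^2-x^3)^2\}\subset\C^3$, the union of the graphs $z=\pm(y^2-x^3)$. Then $\Sing X=\{z=0,\ y^2=x^3\}$, with ideal $(z,y^2-x^3)$. On the sheet $z=y^2-x^3$, every holomorphic $g$ vanishing on $\Sing X$ satisfies $|g|\le C|y^2-x^3|$. Since $y^2-x^3$ has log canonical threshold $5/6<1$, the set $\{|y^2-x^3|<\varepsilon\}$ has volume of order $\varepsilon^{5/3}$. Hence $\int_{X\cap K}\frac{dV}{|g|^2\log^2|g|}=\infty$ for every admissible $g$. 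So your dominated-convergence majorant does not exist, and the resolution/Łojasiewicz verification you sketch cannot produce it. For an isolated singularity with $g=(z_1,\dots,z_N)$ your argument does work, since $\mathrm{vol}(X\cap B_\varepsilon)=O(\varepsilon^{2n})$. But the statement concerns arbitrary $\Sing X$.

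\textbf{How to repair it.} Your cut-offs themselves are fine; what is missing is a correct reason why $|\partial\log(-\log r)|^2$ is integrable on $X$. That reason is plurisubharmonicity. Set $\psi=-\log(-\log r)$. Then $-e^{-\psi}=\log r$ is psh, which is equivalent to $i\partial\dq\psi\ge i\partial\psi\wedge\dq\psi$. Integrate this against $\chi^2[X]\wedge\omega^{n-1}$, after truncating $\psi$ from below by $\max(\psi,-M)$, which preserves the inequality. Stokes then bounds the energy by $C\int_{X\cap L}|\psi|<\infty$.

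This is exactly the mechanism of the paper's proof. The paper takes Sibony's smooth psh functions $0\le u_j\le 1$, vanishing near $\Sing X$ and converging to $1$ off $\Sing X$. It then uses $\|\dq u_j\|^2_{L^2(K\cap X)}\le \frac12\int\chi^2 i\partial\dq u_j^2\wedge T\wedge\omega^{n-1} = \frac12\int (u_j^2-1)\, i\partial\dq\chi^2\wedge T\wedge\omega^{n-1}\le C_\chi\|u_j^2-1\|_{L^1(L\cap X)}\to 0$. The gain there is that no volume or integrability estimate near $\Sing X$ is needed at all.

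Alternatively, you can keep your original idea but use the Lipschitz distance $d$ to $\Sing X$ instead of $|g|$. Then $|\dq d|\le 1$, and $\mathrm{vol}\{x\in X\cap K: d<\varepsilon\}=O(\varepsilon^2)$ does hold. The reason is that $\Sing X\cap K'$ is covered by $O(\varepsilon^{-2(n-1)})$ balls of radius $\varepsilon$, and $\mathrm{vol}(X\cap B(a,2\varepsilon))=O(\varepsilon^{2n})$ uniformly. The covering bound uses the Lelong-number lower density bound, not merely finite Hausdorff measure.

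A minor point: with your $\rho_k$, which equals $1$ for $t\ge -k$, the function $\rho_k(\log(-\log r))$ equals $1$ near $\Sing X$. The orientation has to be reversed.
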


Here, locally bounded means bounded on $K^*=K-\Sing X$ for compact sets $K\subset\subset X$.
A proof of Theorem \ref{thm:bounded}, using resolution of singularities,
was given in \cite{parabolic}, Theorem 1.6.
But Theorem \ref{thm:bounded} can be achieved also in a more direct way
as an easy consequence of Sibony's results on extension of analytic objects \cite{Sibony}.
We recall it from \cite{Serre} for convenience of the reader.

\begin{proof}
As the problem is local by partition of unity, consider a Hermitian complex space $X$ of dimension $n$,
embedded locally in some open set $\Omega\subset \C^N$.
Let $T$ be the positive closed current of integration over $X$, and $A$ the singular set of $X$ (as a subset of $\Omega$),
which is a complete pluripolar set.
By \cite{Sibony}, Lemma 1.2, there exists a sequence $\{u_j\}$ of smooth plurisubharmonic functions, $0\leq u_j\leq 1$,
vanishing identically on a neighborhood of $A$, converging uniformly to $1_{\Omega\setminus A}$
on compact subsets of $\Omega\setminus A$.

Consider now compact sets $K, L \subset \Omega$ with $K\subset \mbox{int}(L)$,
and let $0\leq \chi\leq 1$ be a test function
with support in $L$ and $\chi =  1$ on $K$. Let $\omega:=i\partial\dq \|z\|^2$ be the K\"ahler form in $\C^N$. 
Then, similarly as in the proof of Theorem 1.3 in \cite{DiSi}, p. 361, we have
\begin{eqnarray*}
\| \dq u_j\|^2_{L^2(K\cap X)} &\leq& \int \chi^2 i\partial u_j\wedge \dq u_j \wedge T \wedge \omega^{n-1}
\leq \frac{1}{2} \int \chi^2 i\partial \dq u_j^2 \wedge T \wedge \omega^{n-1}\\
&=& \frac{1}{2} \int \chi^2 i\partial \dq (u_j^2-1) \wedge T \wedge \omega^{n-1}
= \frac{1}{2} \int (u_j^2 -1) i\partial\dq \chi^2 \wedge T \wedge \omega^{n-1}\\
&\leq& C_\chi \|u_j^2-1\|_{L^1(L\cap X)} \overset{j\rightarrow \infty}{\longrightarrow} 0,
\end{eqnarray*}
where we have convergence to $0$ because the $u_j^2$ converge uniformly to $1$ on compact subsets of $\Omega\setminus A$.

So, let $f$ be a bounded form in the domain of $\dq_w$.
Then it is easy to see (using the H\"older inequality)
that $f_j:= u_j f$ is a sequence as required in \eqref{eq:ds1}, \eqref{eq:ds2}.
\end{proof}


\bigskip

\section{Coherence of $\mathcal{K}_X^s$ at isolated singularities}\label{sec:coherence}

%
%

In this section, we give a new, short proof of Theorem \ref{thm:main00}.

\medskip
First, note that the Grauert--Riemenschneider canonical sheaf
$\mathcal{K}_X$ is a coherent analytic sheaf by Grauert's direct image theorem,
as $\mathcal{K}_X= \pi_* \mathcal{K}_M$ for any resolution of singularities $\pi: M \rightarrow X$ (see \cite{GR}).

\medskip
Now then, $\mathcal{K}_X^s \subset \mathcal{K}_X$ is an analytic subsheaf, as is easy to see:
If $f$ is a section in $\mathcal{K}_X^s(U)$ such that \eqref{eq:ds1}, \eqref{eq:ds2} holds,
and $h$ is a holomorphic function on the open set $U$, then
\begin{eqnarray*}
h\cdot f_j \ \rightarrow \ h\cdot f &\mbox{ in }& L^{p,q}_h(K-\Sing X),\\
\dq_w (h \cdot f_j) = h\cdot \dq_w f_j \ \rightarrow \ h\cdot \dq_w f=0 &\mbox{ in }& L^{p,q+1}_h(K-\Sing X)
\end{eqnarray*}
on any compact subset $K\subset\subset U$. Hence, $\dq_s(h\cdot f)=0$, too.
A very similar argument shows, by the way, that the sheaves $\mathcal{C}^{p,q}$ and $\mathcal{F}^{p,q}$ are fine.

\medskip
It only remains to show that $\mathcal{K}_X^s$ is finitely generated. So,
assume that $x_0 \in X$ is an isolated singularity and that an open neighborhood $V$ of $x_0$ is embedded
holomorphically in some $\C^N$.
The ideal $\mathcal{K}_{X,x_0}^s$ in $\mathcal{K}_{X,x_0}$ itself is clearly finitely generated,
but we need to find finitely many sections of $\mathcal{K}_{X,x_0}^s$ which generate $\mathcal{K}_X^s$ in a neighborhood of $x_0$.
But this is achieved by considering a standard basis of holomorphic $n$-forms
$$dz_{j_1} \wedge dz_{j_2}\wedge ... \wedge dz_{j_n}$$
in $\C^N$. These forms are bounded and clearly $\dq_w$-closed on the regular locus of $X$.
So, by Theorem \ref{thm:bounded}, restriction to $X$ gives sections in $\mathcal{K}_X^s$,
and the full set of all $\binom{N}{n}$ such forms generates $\mathcal{K}_X^s$ in any
regular point of $X$ by standard differential geometry.



\newpage

\section{$K_X^s$ in canonical Gorenstein singularities}\label{thm:cGs}

In this section, we prove Theorem \ref{thm:main01}.

\medskip

Let $x_0 \in X$ be an isolated canonical Gorenstein singularity.
We may assume that a small neighborhood $U=U(x_0)$ of $x_0$ is embedded holomorphically into some open domain in $\C^N$
such that the Hermitian metric on $X$ is just the restriction of a regular Hermitian metric on $\C^N$,
and that $U$ contains no further singularities. As we consider a local problem, we may simply say that $X=U$.

So, $X$ is Gorenstein, i.e., normal and Cohen-Macaulay and the Grothendieck dualising sheaf $\omega_X$ is locally
free of rank one:
\begin{eqnarray*}
\omega_X &\cong& \OO_X( K_X ),
\end{eqnarray*}
where $K_X$ is the canonical divisor of $X$. By restricting $X$ further if necessary,
we can assume that $K_X$ is trivial so that $\omega_X \cong \OO_X$.

Let $\iota: X  - \Sing X \hookrightarrow X$ be the embedding of the regular locus.
By normality,
$$\omega_X \ = \ \iota_*( \omega_{X - \Sing X}),$$
i.e., sections of $\omega_X$ are nothing but holomorphic $n$-forms on the regular locus (see \cite{GR}, Satz 3.1).
Hence, there is the trivial inclusion of the Grauert--Riemenschneider sheaf of square-integrable holomorphic
$n$-forms on the regular set into the dualizing sheaf:
\begin{eqnarray}\label{eq:KX001}
\mathcal{K}_X &\subset& \omega_X.
\end{eqnarray}

This inclusion becomes an equality if and only if the singularities of $X$ are canonical
(see e.g. \cite{Duke}, Theorem 4.3). The reason is as follows: $X$ has canonical singularities
by definition in terms of discrepancies if and only if $\pi^* \omega_X \subset \mathcal{K}_M$
for any resolution of singularities $\pi: M \longrightarrow X$. If \eqref{eq:KX001} holds,
this is equivalent to $\mathcal{K}_X = \omega_X$ by some easy computations
using the natural inclusions $\pi^* \pi_* \mathcal{G} \subset \mathcal{G}$ and $\mathcal{H} \subset \pi_* \pi^* \mathcal{H}$
for analytic sheaves $\mathcal{G}$, $\mathcal{H}$.

Summing up, we have seen that
\begin{eqnarray}\label{eq:canGor}
\mathcal{K}_X & = & \omega_X \ \ \cong \ \ \OO_X
\end{eqnarray}
at canonical Gorenstein singularities. Hence, we can consider $\mathcal{K}_X^s$ as a sheaf of ideals in
$\OO_X$. Let $\mathfrak{m}$ be the maximal ideal sheaf at $x_0$. In the next section, we will show that
\begin{eqnarray}\label{eq:thm01}
\mathfrak{m} \otimes \mathcal{K}_X & \subset & \mathcal{K}_X^s.
\end{eqnarray}
In view of \eqref{eq:canGor}, this means nothing else but that $\mathcal{K}_X^s$ is a sheaf of ideals in the structure sheaf
that contains the maximal ideal sheaf at $x_0$. Hence, it is either isomorphic to the structure sheaf itself,
or to the maximal ideal sheaf. That proves the first part of Theorem \ref{thm:main01}
as soon as \eqref{eq:thm01} is shown. This will be done in the next subsection.

\smallskip
\subsection{Cut-off functions}\label{ssec:cut-off}

Assume without loss of generality that
$$x_0=0 \in X \subset \C^N.$$
We will use the following cut-off functions to approximate forms
by forms with support away from the isolated singularity. 
As in the proof of Proposition~3.3 in \cite{PS1}, from where we have adopted this construction,
let $\rho_k: \R\rightarrow [0,1]$, $k\geq 1$, be smooth decreasing cut-off functions
satisfying
$$\rho_k(x)=\left\{\begin{array}{ll}
1,\  x\leq k,\\
0,\  x\geq k+1,
\end{array}\right.$$
and $|\rho_k'|\leq 2$. Moreover, let $r: \R_+\rightarrow [0,1/2]$ be a smooth increasing function such that
$$r(x)=\left\{\begin{array}{ll}
x,\  & x\leq 1/4,\\
1/2,\  & x\geq 3/4,
\end{array}\right.$$
and $|r'|\leq 1$.
As cut-off functions we will use 
\begin{equation}\label{eq:cutoff1}
\mu_k(\zeta):=\rho_k\big(\log(-\log r(\|\zeta\|))\big)
\end{equation}
on $X$. Recall that $X$ has an isolated singularity at $x_0=0\in \C^N$.
Note that there is a
constant $C>0$ such that
\begin{equation}\label{eq:cutoff2}
\big| \dq \mu_k(\zeta)\big| \leq C\frac{\chi_k(|\zeta|)}{\|\zeta\| \big| \log\|\zeta\|\big|},
\end{equation}
where $\chi_k$ is the characteristic function of $[e^{-e^{k+1}}, e^{-e^k}]$.

\medskip
In order to prove \eqref{eq:thm01}, consider a germ
$$\eta \in  \big( \mathfrak{m} \otimes \mathcal{K}_X\big)_{x_0}.$$
We can assume that $\eta = f \cdot \omega$, where $f$ is a holomorphic function vanishing
in the origin and $\omega$ is a square-integrable holomorphic $n$-form on $X$.
As the $\mu_k$ are clearly uniformly bounded in $k$,
it follows by Lebesgue's dominated convergence that
\begin{eqnarray*}
\eta_k:= \mu_k \cdot \eta \ \longrightarrow \eta 
\end{eqnarray*}
in $L^{n,0}$. But \eqref{eq:cutoff2} shows that the $f \cdot \dq \mu_k$  are also uniformly bounded in $k$,
because $f$ is vanishing in the origin.
Hence, dominated convergence gives also
\begin{eqnarray*}
\dq \eta_k &=& f \cdot \dq \mu_k \wedge \omega \ \longrightarrow \ 0
\end{eqnarray*}
in $L^{n,1}$, as the forms $\dq \eta_k$ are uniformly square integrable and the support of $\dq \mu_k$ is vanishing.
Hence, $\eta$ is actually approximated in the graph norm by forms with support away from the singularity.
This means nothing else but $\eta \in \Dom\dq_s$ and shows $\eta
\in \big(\mathcal{K}_X^s\big)_{x_0}$, as desired.

\bigskip
\subsection{Resolution of singularities}\label{ssec:resolution}

Our next purpose is to find a smooth representing model for $\mathcal{K}_X^s$.
This will be done in terms of a resolution of singularities and finish the proof of Theorem \ref{thm:main01}. So, let
$$\pi: M \longrightarrow X$$
be a resolution of singularities. As seen above, we have to distinguish two cases.
If $\mathcal{K}_X^s$ equals the Grauert--Riemenschneider canonical sheaf,
then nothing remains to do as we obtain simply
$$\mathcal{K}_X^s \ \ = \ \ \mathcal{K}_X \ \ = \ \ \pi_* \mathcal{K}_M.$$
So, assume the second case,
$$\mathcal{K}_X^s \ \ = \ \ \mathfrak{m} \otimes \mathcal{K}_X.$$

\newpage
In order to obtain a decent statement, assume that the exceptional set $E$ of the resolution $\pi$
is a hypersurface.
As above, assume without loss of generality that
$$x_0=0 \in X \subset \C^N.$$

We will now consider the unreduced exceptional divisor $Z:=\pi^{-1}(\{x_0\})$.
Let $w_1, ..., w_N$ be the standard Euclidean coordinates in $\C^N$.
Then $Z$ is given as the common zero set of the holomorphic functions $\{\pi^*w_1, ..., \pi^*w_N\}$.
For another realization, let
$$F(w) \ \ := \ \ \left( \sum_{k=1}^N \|w_k\|^2\right)^{1/2}.$$
Then $\pi^*F$ vanishes precisely along the exceptional set $E$ of the resolution,
and the order of vanishing equals the order of $Z$.

We will now show that
\begin{eqnarray}\label{eq:nice01}
\pi_*\big( \mathcal{K}_M \otimes \OO(-Z)\big) &\subset& \mathcal{K}_X^s.
\end{eqnarray}
Note that, though we have $\pi_* \mathcal{K}_M = \mathcal{K}_X$ and $\pi_* \OO(-Z) = \mathfrak{m}$,
we cannot conclude $\pi_*\big( \mathcal{K}_M \otimes \OO(-Z)\big)=\mathcal{K}_X \otimes \mathfrak{m} = \mathcal{K}_X^s$
directly, as by no means $\pi_*(\mathcal{F}\otimes\mathcal{G}) = \pi_* \mathcal{F} \otimes \pi_*\mathcal{G}$
for arbitrary analytic sheaves $\mathcal{F}, \mathcal{G}$.

So, consider a small open neighborhood $U$ of $x_0=0$ in $X$ and a section
$$\omega \ \in \ \big( \mathcal{K}_M \otimes \OO(-Z)\big)(\pi^{-1}(U)).$$
So, $\omega$ is a holomorphic $n$-form, vanishing (at least) to the order of $Z$ on the exceptional set.
This means that $\omega/\pi^* F$ is still bounded, and particularly square-integrable.
Denote by $\pi_* \omega$ the push-forward of $\omega$, restricted to $M - E$,
to the regular part of $U$, which is here just $U^*:=U - \{0\}$.
We need to show that
\begin{eqnarray}\label{eq:nice02}
\pi_* \omega \ \in \ \mathcal{K}_X^s(U^*).
\end{eqnarray}
It is clear that $\pi_*\omega$ is a holomorphic $n$-form. It is again square-integrable
as this property is invariant under modifications for $n$-forms.
But we know even more. By the transformation formula,
\begin{eqnarray}\label{eq:nice03}
\|\pi_* \omega/ F\|^2_{L^2_X} &=& \int_{U^*} \frac{\pi_* \omega \wedge \overline{\pi_*\omega}}{F^2}
\ = \ \int_{\pi^{-1}(U^*)} \frac{\omega\wedge\o{\omega}}{\pi^* F^2} \ < \ \infty\ ,
\end{eqnarray}
as $\omega/\pi^*F$ is even bounded.
So, we conclude that even $\pi_* \omega/F$ is square-integrable on $X$,
and this is more than enough to prove \eqref{eq:nice02} in the following.

\bigskip
Let $\{\mu_k\}$ again be the cut-off sequence from the previous subsection,
defined in \eqref{eq:cutoff1}. So,
$$\omega_k \ := \ \mu_k \cdot \pi_* \omega$$
is a sequence of square-integrable $n$-forms with support away from the singularity.
The convergence to $\pi_* \omega$ in the graph norm follows as
in the previous subsection: $\omega_k \longrightarrow \pi_* \omega$ in $L^{n,0}$ for $k\rightarrow\infty$
by Lebesgue's dominated convergence. Moreover,
$|\dq \mu_k| \cdot \|\zeta\|$ is uniformly bounded by \eqref{eq:cutoff2} and 
$\pi_*\omega/\|\zeta\|$ is square-integrable as seen in \eqref{eq:nice03}.
Hence, the forms $\dq \mu_k \wedge \pi_* \omega$ are uniformly square-integrable,
independent of $k$. So,
\begin{eqnarray*}
\dq \omega_k &=&  \dq \mu_k \wedge \pi_* \omega \ \longrightarrow \ 0
\end{eqnarray*}
in $L^{n,1}$, as the support of $\dq \mu_k$ is vanishing.
This proves \eqref{eq:nice02} and so the desired \eqref{eq:nice01}.

\medskip
Consider now a germ
$$\eta \ \in \ \mathcal{K}_{X,x_0}^s \ = \ \big( \mathfrak{m} \otimes \mathcal{K}_X\big)_{x_0}.$$
As above, 
we can assume that $\eta = f \cdot \omega$, where $f$ is a holomorphic function vanishing
in the origin and $\omega$ is a square-integrable holomorphic $n$-form on $X$.
But then $\pi^* f$ is a section in $\OO(-Z)$, and $\pi^* \omega$ is a holomorphic $n$-form on $M$.
A priori, $\pi^*\omega$ is defined only outside the exceptional set, but it is square-integrable
by the transformation law, and so it extends holomorphically over the exceptional set.
Therefore, $\pi^* \eta=\pi^*f \cdot \pi^*\omega$ is a section of $\mathcal{K}_M \otimes \OO(-Z)$,
meaning that $\eta$ can be naturally considered as a section in $\pi_*( \mathcal{K}_M \otimes \OO(-Z))$.
This shows
$$\mathcal{K}_X^s = \mathfrak{m} \otimes \mathcal{K}_X 
\ \subset \ \pi_*\big( \mathcal{K}_M \otimes \OO(-Z) \big),$$
which, in combination with \eqref{eq:nice01}, concludes the proof of Theorem \ref{thm:main01}.

\bigskip
\section{Smooth representation of $\dq$-cohomology}\label{sec:smooth}

In this section, we prove Theorem \ref{thm:main02}:
We use a resolution of singularities to give a smooth representation of the $\dq_w$- and the $\dq_s$-cohomology
on the level of $(0,q)$ and $(n,q)$-forms.


\subsection{$L^2$-forms and resolution of singularities}\label{sssec:wresolution}

Let $\pi: M \rightarrow X$
be a resolution of singularities, i.e. a proper holomorphic surjection such that
\begin{eqnarray*}
\pi|_{M\setminus E}: M\setminus E \rightarrow X\setminus\Sing X
\end{eqnarray*}
is biholomorphic, where $E=|\pi^{-1}(\Sing X)|$ is the (reduced) exceptional set.
We may assume that $E$ is a divisor with only normal crossings,
i.e. the irreducible components of $E$ are regular and meet complex transversely (though we do not need that for the moment).
Let $Z:=\pi^{-1}(\Sing X)$ be the unreduced exceptional divisor.
Let
$$\gamma:= \pi^* h$$
be the pullback of the Hermitian metric $h$ of $X$ to $M$.
$\gamma$ is positive semidefinite (a pseudo-metric) with degeneracy locus $E$.
We give $M$ the structure of a Hermitian manifold with a freely chosen (positive definite)
metric $\sigma$. Then $\gamma \lesssim \sigma$ on compact subsets of $M$,
and $\gamma \sim \sigma$ on compact subsets of $M\setminus E$.
For an open set $U\subset M$, we denote by $L^{p,q}_{\gamma}(U)$ and $L^{p,q}_{\sigma}(U)$
the spaces of square-integrable $(p,q)$-forms with respect to the (pseudo-)metrics $\gamma$ and $\sigma$,
respectively. 

Since $\sigma$ is positive definite and $\gamma$ is positive semi-definite,
there exists a smooth function $g\in C^\infty(M,\R)$ such that
\begin{eqnarray}\label{eq:l2dV}
dV_\gamma = g^2 dV_\sigma.
\end{eqnarray}
This yields $|g| |\omega|_\gamma  = |\omega|_\sigma$
if $\omega$ is an $(n,0)$-form, and
$|\omega|_\sigma \lesssim_U |g||\omega|_\gamma$
on $U\subset\subset M$ if $\omega$ is a $(n,q)$-form, $0\leq q\leq n$.\footnote{
This statement implies particularly that $|\omega|_\sigma/|\omega|_\gamma$ is locally bounded on $M$ for $(n,q)$-forms.}
So, for an $(n,q)$ form $\omega$ on $U\subset\subset M$:
\begin{eqnarray}\label{eq:l2est2}
\int_U |\omega|_\sigma^2 dV_\sigma \lesssim_U \int_U g^{2} |\omega|_\gamma^2 g^{-2} dV_\gamma = \int_U |\omega|^2_\gamma dV_\gamma.
\end{eqnarray}
Conversely,
$|g| |\eta|_\gamma \lesssim_U |\eta|_\sigma$
on $U\subset\subset M$ if $\eta$ is a $(0,q)$-form, $0\leq q\leq n$.\footnote{
For $(0,q)$-forms, $|\omega|_\gamma/|\omega|_\sigma$ is locally bounded.}
So, for a $(0,q)$ form $\eta$ on $U\subset\subset M$:
\begin{eqnarray}\label{eq:l2est}
\int_U |\eta|_\gamma^2 dV_\gamma \lesssim_U \int_U g^{-2} |\eta|_\sigma^2 g^2 dV_\sigma = \int_U |\eta|^2_\sigma dV_\sigma.
\end{eqnarray}
For open sets $U\subset\subset M$ and all $0\leq q\leq n$, we conclude the relations
\begin{eqnarray}\label{eq:l2est3}
L^{n,q}_{\gamma}(U) &\subset& L^{n,q}_{\sigma}(U),\\
\label{eq:l2est4}
L^{0,q}_{\sigma}(U) &\subset& L^{0,q}_{\gamma}(U).
\end{eqnarray}

\medskip
Moreover,
for an open set $\Omega \subset X$, $\Omega^*=\Omega \setminus \Sing X$, $\wt{\Omega}:=\pi^{-1}(\Omega)$,
pullback of forms under $\pi$ gives the isometry
\begin{eqnarray}\label{eq:l2est5}
\pi^*: L^2_{p,q}(\Omega^*) \longrightarrow L^{p,q}_{\gamma}(\wt{\Omega}\setminus E) \cong L^{p,q}_{\gamma}(\wt{\Omega}),
\end{eqnarray}
where the last identification is by trivial extension of forms over the thin exceptional set $E$.
Combining \eqref{eq:l2est3} with \eqref{eq:l2est5},
we see that $\pi^*$ maps
\begin{eqnarray}\label{eq:morph1}
\pi^*: L^2_{n,q}(\Omega^*) \longrightarrow L^{n,q}_\sigma(\pi^{-1}(\Omega))
\end{eqnarray}
continuously if $\Omega\subset\subset X$ is a relatively compact open set.
We shall now show how \eqref{eq:morph1} induces the map \eqref{eq:main02a} from 
Theorem \ref{thm:main02}:
\begin{eqnarray}\label{eq:main02ab}
H^{n,q}_{s,loc}(\Omega)
\overset{\cong}{\longrightarrow} H^q\big(\pi^{-1}(\Omega),\mathcal{K}_M\big)\ .
\end{eqnarray}

\medskip
For that, we need a suitable realization of the $L^2_{loc}$-cohomology on $M$.
Let $\mathcal{L}^{p,q}_\sigma$ be the sheaves of germs of forms on $M$ which are locally in $L^{p,q}_\sigma$,
and we denote again by $\dq_w$ the $\dq$-operator in the sense of distributions on such forms
because there is no danger of confusion in what follows.
We can simply use the notation and definitions from Section \ref{sec:dq-complexes} with the choice $X=M$ and $\Sing X=\emptyset$.
Again, we denote the sheaves of germs in the domain of $\dq_w$ by
$$\mathcal{C}^{p,q}_\sigma := \mathcal{L}^{p,q}_\sigma \cap \dq_w^{-1} \mathcal{L}^{p,q+1}_\sigma$$
in the sense that
$\mathcal{C}^{p,q}_\sigma (U) = \mathcal{L}^{p,q}_\sigma (U) \cap \Dom \dq_w(U)$. It is well-known that
$$\mathcal{K}_M := \ker \dq_w \subset \mathcal{C}^{n,0}_\sigma$$
is the usual canonical sheaf on $M$, and that
\begin{eqnarray}\label{eq:res}
0\rightarrow \mathcal{K}_M \hookrightarrow \mathcal{C}^{n,0}_\sigma \overset{\dq_w}{\longrightarrow} \mathcal{C}^{n,1}_\sigma
\overset{\dq_w}{\longrightarrow} \mathcal{C}^{n,2}_\sigma \longrightarrow ...
\end{eqnarray}
is a fine resolution, so that 
$$H^q(U,\mathcal{K}_M) \cong H^q\big(\Gamma(U,\mathcal{C}^{n,*}_\sigma)\big)\ \ ,
\ \ H^q_{cpt}(U,\mathcal{K}_M) \cong H^q(\Gamma_{cpt}\big(U,\mathcal{C}^{n,*}_\sigma)\big)$$
on open sets $U\subset M$.

Now we can use \eqref{eq:morph1} to see that $\pi^*$ defines
a morphism of complexes 
\begin{eqnarray}\label{eq:morph2}
\pi^*: (\mathcal{F}^{n,*},\dq_s) \rightarrow \big(\pi_* (\mathcal{C}^{n,*}_\sigma),\pi_* \dq_w\big).
\end{eqnarray}
Let $\Omega\subset X$ be an open set and let $f\in\mathcal{F}^{n,q}(\Omega)$, $g\in \mathcal{F}^{n,q+1}(\Omega)$
such that $\dq_s f=g$. Then, particularly, also $\dq_w f=g$.
By \eqref{eq:morph1}, it follows that $\pi^* f\in \mathcal{L}^{n,q}_\sigma(\pi^{-1}(\Omega))$
and $\pi^* g\in \mathcal{L}^{n,q+1}_\sigma(\pi^{-1}(\Omega))$ so that $\dq_w \pi^* f=\pi^* g$ on $\pi^{-1}(\Omega)\setminus E$.
But then the $L^2$-extension theorem \cite{Rp1}, Theorem 3.2, tells us that $\dq_w\pi^* f=\pi^*g$ on $\pi^{-1}(\Omega)$.
So $\pi^* f\in \mathcal{C}^{n,q}_\sigma(\pi^{-1}(\Omega))$, $\pi^* g\in \mathcal{C}^{n,q+1}_\sigma(\pi^{-1}(\Omega))$
and \eqref{eq:morph2} is in fact a morphism of complexes.
Including $\mathcal{K}_X^s =\ker\dq_s \subset \mathcal{F}^{n,0}$ and $\mathcal{K}_M=\ker\dq_w \subset \mathcal{C}^{n,0}_\sigma(L)$,
we obtain the commutative diagram
\begin{eqnarray*}
\begin{xy}
  \xymatrix{
      0 \ar[r] & \mathcal{K}_X^s \ar[r] \ar[d]^{\pi^*}    &   \mathcal{F}^{n,0} \ar[r]^{\dq_s} \ar[d]^{\pi^*} & 
\mathcal{F}^{n,1} \ar[r]^{\dq_s} \ar[d]^{\pi^*} & \mathcal{F}^{n,2} \ar[r]
\ar[d]^{\pi^*} & ... \\
      0 \ar[r] & \pi_* \mathcal{K}_M \ar[r] &  \pi_* \mathcal{C}^{n,0}_\sigma \ar[r]^{\pi_* \dq_w}  & 
\pi_* \mathcal{C}^{n,1}_\sigma \ar[r]^{\pi_*\dq_w} & 
\pi_* \mathcal{C}^{n,2}_\sigma \ar[r]
& ... }
\end{xy}
\end{eqnarray*}

It follows from commutativity of the diagram that $\pi^*$ induces morphisms on the cohomology of the complexes,
\begin{eqnarray}\label{eq:morph3}
H^{n,q}_{s,loc} (\Omega) = H^q\big(\Gamma(\Omega,\mathcal{F}^{n,*})\big) 
\overset{\pi^*}{\longrightarrow}
H^q\big(\Gamma(\pi^{-1}(\Omega),\mathcal{C}^{n,*}_\sigma) \big) = H^{n,q}_{w,loc}\big(\pi^{-1}(\Omega)\big),\\
\label{eq:morph4}
H^{n,q}_{s,cpt} (\Omega) = H^q\big(\Gamma_{cpt}(\Omega,\mathcal{F}^{n,*})\big) 
\overset{\pi^*}{\longrightarrow}
H^q\big(\Gamma_{cpt}(\pi^{-1}(\Omega),\mathcal{C}^{n,*}_\sigma) \big) = H^{n,q}_{w,cpt}\big(\pi^{-1}(\Omega)\big),
\end{eqnarray}
for any open set $\Omega\subset X$ and all $q\geq 0$.

\medskip
Note that the pull-back maps $\pi^*$ are continuous on the Level of $L^2$-forms.
Thus, the induced maps are also continuous between the $L^2_{loc}$-spaces if those
carry the natural induced Fr{\'e}chet space topology.
This then descents to the quotient spaces, so that the maps in \eqref{eq:morph3}, \eqref{eq:morph4}
are actually continuous if we consider the cohomology groups as topological quotient spaces.

\medskip
Similarly as above, but a little bit less involved, as one does not need to consider the extension of the $\dq$-equation
across the exceptional set, the push-forward of $L^2$-forms (justified by \eqref{eq:l2est4}),
\begin{eqnarray}\label{eq:morph1e}
\pi_*:  L^{0,q}_\sigma(\pi^{-1}(\Omega)) \rightarrow L^2_{0,q}(\Omega^*)\ ,
\end{eqnarray}
leads to the commutative diagram
\begin{eqnarray*}
\begin{xy}
  \xymatrix{
      0 \ar[r] & \pi_*\OO_M \ar[r] \ar[d]^{\pi_*}    &   \pi_* \mathcal{C}^{0,0}_\sigma \ar[r]^{\pi_* \dq_w} \ar[d]^{\pi_*} & 
\pi_* \mathcal{C}^{0,1}_\sigma \ar[r]^{\pi_* \dq_w} \ar[d]^{\pi_*} & \pi_*\mathcal{C}^{0,2}_\sigma \ar[r]
\ar[d]^{\pi^*} & ... \\
      0 \ar[r] & \ker \dq_w \ar[r] &   \mathcal{C}^{0,0} \ar[r]^{ \dq_w}  & 
 \mathcal{C}^{0,1} \ar[r]^{\dq_w} & 
 \mathcal{C}^{0,2} \ar[r]
& ... }
\end{xy}
\end{eqnarray*}
Again, commutativity of the diagram shows that $\pi_*$ induces morphisms on the cohomology of the complexes,
\begin{eqnarray}\label{eq:morph5}
H^q\big(\Gamma(\pi^{-1}(\Omega),\mathcal{C}^{0,*}_\sigma \big) = H^{0,q}_{w,loc}\big(\pi^{-1}(\Omega)\big)
\overset{\pi_*}{\longrightarrow}
H^{0,q}_{w,loc} (\Omega) = H^q\big(\Gamma(\Omega,\mathcal{C}^{0,*})\big).
\end{eqnarray}
As above, this map $\pi_*$ is continuous if one considers the cohomology groups as topological spaces
with the topology induced from the natural $L^2_{loc}$-Fr{\'e}chet spaces.

\medskip
\subsection{Takegoshi vanishing and first part of Theorem \ref{thm:main02}}

We can now use Takegoshi's vanishing theorem \cite{T} to show that
the induced maps in \eqref{eq:morph3}, \eqref{eq:morph4} are actually isomorphisms.
Let $L \rightarrow M$ be a holomorphic line bundle that is locally semi-positive with respect
to the base space $X$, i.e., assume that $L$ carries a semi-positive Hermitian 
metric on open sets $\pi^{-1}(V)$, where $V$ is a small domain in $X$.

Takegoshi's vanishing theorem (see \cite{T}, Remark 2) tells us that
the higher direct image sheaves of $\mathcal{K}_M(L)$ vanish under these assumptions:
\begin{eqnarray}\label{eq:take1}
R^q\pi_* \big(\mathcal{K}_M(L)\big) =0,\ \ q>0.
\end{eqnarray}
Particularly, $R^q \pi_* \mathcal{K}_M=0$ for $q>0$, so that the complex
\begin{eqnarray*}
\big( \pi_* \mathcal{C}^{n,*}_\sigma, \pi_* \dq_w \big)
\end{eqnarray*}
is exact by the Leray spectral sequence. Here, 
note that the direct image of a fine sheaf under $\pi_*$ is again a fine sheaf.

So, the right-hand side in \eqref{eq:morph3} computes
\begin{eqnarray*}
H^q\big( \pi^{-1}(\Omega), \mathcal{K}_M\big) &\cong& H^q\big( \Omega, \pi_* \mathcal{K}_M \big) = 
H^q\big( \Omega, \mathcal{K}_X \big)\ ,
\end{eqnarray*}
whereas the right-hand side in \eqref{eq:morph4} computes
\begin{eqnarray*}
H^q_{cpt} \big( \pi^{-1}(\Omega), \mathcal{K}_M\big) &\cong& H^q_{cpt}\big( \Omega, \pi_* \mathcal{K}_M \big) = 
H^q_{cpt} \big( \Omega, \mathcal{K}_X \big)\ .
\end{eqnarray*}


On the other hand,
it is shown in \cite{Duke}, Theorem 1.9, that the $\dq_s$-complex
\begin{eqnarray*}
0\rightarrow \mathcal{K}_X^s \hookrightarrow \mathcal{F}^{n,0} \overset{\dq_s}{\longrightarrow}
\mathcal{F}^{n,1} \overset{\dq_s}{\longrightarrow} \mathcal{F}^{n,2} \overset{\dq_s}{\longrightarrow} ... 
\end{eqnarray*}
is a fine resolution of $\mathcal{K}_X^s$ (as $X$ is assumed to have only isolated singularities). It follows that
\begin{eqnarray*}
H^q(\Omega, \mathcal{K}_X^s) &\cong& H^{n,q}_{s,loc}(\Omega) = H^q\big( \Gamma(\Omega,\mathcal{F}^{n,*})\big),\\
H^q_{cpt}(\Omega, \mathcal{K}_X^s) &\cong& H^{n,q}_{s,cpt}(\Omega) = H^q\big( \Gamma_{cpt}(\Omega,\mathcal{F}^{n,*})\big),
\end{eqnarray*}
for the left-hand sides in \eqref{eq:morph3} and \eqref{eq:morph4}, respectively.

\bigskip
Assume now that $\Omega$ contains only singularities of the first kind \eqref{eq:type0} in Theorem \ref{thm:main01}.
Then $\mathcal{K}_X^s = \mathcal{K}_X$ on $\Omega$, and so we see that the induced mappings $\pi^*$
in \eqref{eq:morph3} and \eqref{eq:morph4}, respectively, are actually isomorphisms. This, finally, proves the first statement
of Theorem \ref{thm:main02}.

\medskip
\subsection{$L^2$-Serre duality and second part of Theorem \ref{thm:main02}}

Consider the diagram
\begin{eqnarray}\label{eq:diagram100}
\begin{xy}
  \xymatrix{\mathcal{L}^{0,q}(\Omega) & \times & 
  \mathcal{L}^{n,n-q}_{cpt}(\Omega) \ar[d]^{\pi^*} \ar[r]^{\ \ \ \ \ \int} & \C \\
  \mathcal{L}^{0,q}_\sigma\big(\pi^{-1}(\Omega)\big) \ar[u]^{\pi_*} & \times & \mathcal{L}^{n,n-q}_{cpt}\big(\pi^{-1}(\Omega)\big) \ar[r]^{\ \ \ \ \ \ \ \  \int} & \C\ ,}
  \end{xy}
\end{eqnarray}
where both pairings are given by integration $(a,b) \mapsto \int a\wedge b$. The integrals exist by the H\"older inequality
as we are integrating products of $L^2_{loc}$-forms where $b$ has compact support.
By the transformation law, we have
\begin{eqnarray*}
\int_{\Omega^*} \pi_* \eta \wedge \omega &=& \int_{\pi^{-1}(\Omega)} \eta \wedge \pi^* \omega\ ,
\end{eqnarray*}
where, as above, $\Omega^*=\Omega \setminus \Sing X$,
and we have replaced $\pi^{-1}(\Omega) \setminus E$ on the right hand side
by $\pi^{-1}(\Omega)$ as the exceptional set $E$ is neglectable. 

\bigskip
Classically, 
the lower line in the diagram \eqref{eq:diagram100} induces a pairing on the level of $\dq$-cohomology,
as can be seen by use of partial integration.
The same holds true for the upper line if we consider a pairing between $\dq_w$- and $\dq_s$-cohomology.
The reason is that partial integration is possible even on $X$ if one of the forms is in the domain of $\dq_s$
so that it can be approximated in the graph norm by forms with support away from the singular set. Then, partial integration
is possible across $\Sing X$ (cf. \cite{Serre} for more details). Hence, we obtain the induced diagram
\begin{eqnarray}\label{eq:diagram101}
\begin{xy}
  \xymatrix{ H^{0,q}_{w,loc}(\Omega) & \times & 
  H^{n,n-q}_{s,cpt}(\Omega) \ar[d]^{\pi^*} \ar[r]^{\ \ \ \ \ \int} & \C \\
  H^{0,q}_{w,loc} \big(\pi^{-1}(\Omega)\big) \ar[u]^{\pi_*} & \times & H^{n,n-q}_{w,cpt}\big(\pi^{-1}(\Omega)\big) \ar[r]^{\ \ \ \ \ \ \ \  \int} & \C\ ,}
  \end{xy}
\end{eqnarray}
where the vertical arrows are just the mappings $\pi_*$ from \eqref{eq:morph5} (left-hand side) 
and $\pi^*$ from \eqref{eq:morph4} (right-hand side). We have now
\begin{eqnarray}\label{eq:pairing}
\int_{\Omega^*} \pi_* [\eta] \wedge [\omega] &=& \int_{\pi^{-1}(\Omega)} [\eta] \wedge \pi^* [\omega]
\end{eqnarray}
for cohomology classes.

\bigskip
We have already seen that the pull-back $\pi^*$ in \eqref{eq:diagram101} is an isomorphism.
Now, we clearly intend to use Serre-duality to conclude that $\pi_*$ is also an isomorphism.
For this, we require the Hausdorff property for the cohomology groups under consideration,
considered as topological spaces. 

To achieve that, let us first assume that $\Omega$ is just a small neighborhood of an isolated singularity $x_0\in X$
with smooth strongly pseudoconvex boundary $b\Omega$.
Consider for example $X \cap B_\epsilon(x_0)$, where $B_\epsilon(x_0)$ is some small ball
in the ambient space for a local embedding.
Then $\pi^{-1}(\Omega)$ also has smooth strongly pseudoconvex boundary,
and so $H^{0,q}_{w,loc}\big(\pi^{-1}(\Omega)\big)$ is of finite dimension for all $q>0$
by classical $\dq$-theory on complex manifolds (see \cite{G1}).
Hence, $H^{0,q}_{w,loc}\big(\pi^{-1}(\Omega)\big)$ is particularly Hausdorff for all $q\geq 0$
(the trivial case $q=0$ can be clearly included),
and so
the pairing in the lower line of \eqref{eq:diagram101}
is non-degenerate by classical Serre duality (see \cite{JPSerre}, Th{\'e}or{\`e}me 2).
So, we conclude that
\begin{eqnarray*}
H^{n,n-q}_{w,cpt}\big( \pi^{-1}(\Omega) \big) &\cong& H^{n,n-q}_{s,cpt}(\Omega)
\end{eqnarray*}
is of finite dimension for $1\leq q \leq n$, and Hausdorff for $q=0$.
The last claim follows from the fact that
the map $\pi^*$ on the right hand side of \eqref{eq:diagram101} is a topological isomorphism
(cf. also the topological considerations in \cite{Serre}).

But, by the $L^2$-duality on singular complex spaces introduced in \cite{Serre} (Lemma 3.5),
for $0\leq q \leq n-1$,
$H^{n,n-q}_{s,cpt}(\Omega)$ is Hausdorff if and only if $H^{0,q+1}_{w,loc}(\Omega)$
is Hausdorff. So we can conclude by \cite{Serre}, Theorem 1.4, that 
the upper line of \eqref{eq:diagram101} is also non-degenerate ($H^{0,0}_{w,loc}(\Omega)$ is trivially Hausdorff).
So, $\pi_*$ is also an isomorphism in \eqref{eq:diagram101} for all $0\leq q\leq n$.
The case $q=0$ follows also directly from normality of $X$.

\medskip
We will now remove the assumption that the boundary of $\Omega$ is strongly pseudoconvex
by use of a Mayer-Vietoris sequence.
Let $\Omega$ be a domain in $X$ and $\pi: M\rightarrow \Omega$ the resolution of the isolated singularities
inside $\Omega$. We do not resolve possible singularities in the boundary of $\Omega$,
so that the metric is not modified at the boundary of $\Omega$.
The set of singularities is clearly discrete in $\Omega$.

At each singularity $x_\nu\in\Omega$ choose a small ball
$$B_\nu \ := \ X\cap B_{2\epsilon}(x_\nu)$$
such that $B_\nu \subset\subset \Omega$ and the $B_\nu$ are pairwise disjount.
Let
$$D_\nu \ := \ X\cap B_{\epsilon}(x_\nu).$$
We will apply the Mayer-Vietoris sequence for Dolbeault cohomology to the open sets
$$U \ := \ \bigcup_{\nu\in\Sing \Omega} B_\nu \ \ \ , \ \ \ V \ := \ \Omega \setminus \bigcup_{\nu\in\Sing \Omega} \overline{D_\nu}.$$

We can assume that the metric is not modified under the resolution $\pi: M\rightarrow \Omega$
on a neighborhood of $V$. Hence, for all $q\geq 0$, the maps 
\begin{eqnarray*}
\pi_*: H^{0,q}_{w,loc} \big(\pi^{-1}(V)\big) &\longrightarrow& H^{0,q}_{w,loc}(V)\\
\pi_*: H^{0,q}_{w,loc} \big(\pi^{-1}(U\cap V)\big) &\longrightarrow& H^{0,q}_{w,loc}(U \cap V)
\end{eqnarray*}
are trivially isomorphic, whereas we obtain the isomorphisms
$$\pi_*: H^{0,q}_{w,loc} \big(\pi^{-1}(U)\big) \longrightarrow H^{0,q}_{w,loc}(U)$$
by what we have seen above ($U$ is a disjoint union of domains with strongly pseudoconvex boundary).

Let $\wt{U}:=\pi^{-1}(U)$, $\wt{V}:=\pi^{-1}(V)$, $W:=U\cap V$, $W:=\pi^{-1}(U\cap V)$.
 Consider now the commutative diagram
\begin{eqnarray*}
\begin{xy}
  \xymatrix{
      H^{q-1}(U) \oplus H^{q-1}(V) \ar[r]     &   H^{q-1}(W) \ar[r]  & 
H^{q}(\Omega) \ar[r] & H^{q}(U) \oplus H^{q}(V) \ar[r]
 & H^{q}(W)\\
H^{q-1}(\wt{U}) \oplus H^{q-1}(\wt{V}) \ar[r] \ar[u]^{\pi_*}_{\cong}   &   H^{q-1}(\wt{W}) \ar[r] \ar[u]^{\pi_*}_{\cong}& 
H^{q}\big(\pi^{-1}(\Omega)\big) \ar[r]  \ar[u]^{\pi_*} & H^{q}(\wt{U}) \oplus H^{q}(\wt{V}) \ar[r] \ar[u]^{\pi_*}_{\cong} 
 & H^{q}(\wt{W})  \ar[u]^{\pi_*}_{\cong} \ ,}
\end{xy}
\end{eqnarray*}
where we write $H^q$ in place of $H^{0,q}_{w,loc}$ for ease of notation.
Both, the upper and the lower line of the diagram are exact by the Mayer-Vietoris sequence for Dolbeault cohomology,
which is valid because the sheaves of $\mathcal{L}^2$-forms
are all fine, i.e., admit partition of unities.

The two vertical arrows on the left hand side and the two vertical arrows on the right hand side
are ismorphisms by what we have seen above.

Hence, push-forward of forms induces for any $q\geq0$ a natural isomorphism
\begin{eqnarray*}
H^q\big(\pi^{-1}(\Omega),\OO_M\big) 
\overset{\cong}{\longrightarrow} H^{0,q}_{w,loc}(\Omega).
\end{eqnarray*}
by use of the $5$-Lemma in the commutative diagram. This proves the second statement of Theorem \ref{thm:main02}.

\medskip
\subsection{Singularities of the second kind}

We will now prove the third statement, \eqref{eq:main02c}, of Theorem \ref{thm:main02}.
To do so, we use again Takegoshi's vanishing theorem \cite{T},
similar to the proof of the first statement of our theorem.

\medskip
As $\Omega$ contains only singularities of the second kind \eqref{eq:type1} in Theorem \ref{thm:main01},
it follows that
$$\mathcal{K}_X^s \ \ = \ \ \pi_* \big( \mathcal{K}_M \otimes \OO(-Z)\big)$$
over $\Omega$, where $\pi: M \rightarrow X$ is a resolution of singularities
and $Z:=\pi^{-1}(\Sing X)$ the unreduced exceptional divisor.
The proof of \eqref{eq:main02c} follows now completely analogous 
to the first statement of the theorem by use of the Leray spectral sequence
if we can show that
\begin{eqnarray}\label{eq:take77}
R^q \pi_* \big( \mathcal{K}_M \otimes \OO(-Z)\big) \ \ = \ \ 0
\end{eqnarray}
for $q>0$. But this is again a consequence of Takegoshi's vanishing theorem
because $\OO(-Z)$ is locally semi-positive with respect to $\pi$,
as we will explain in the following.

\bigskip
We can use Takegoshi's vanishing theorem \cite{T} to show that
the induced maps in \eqref{eq:morph3}, \eqref{eq:morph4} are actually isomorphisms.
Let $L \rightarrow M$ be a holomorphic line bundle that is locally semi-positive with respect
to the base space $X$, i.e., assume that $L$ carries a semi-positive Hermitian 
metric on open sets of the form $\pi^{-1}(V)$ where $V$ is a small domain in $X$.

Takegoshi's vanishing theorem (see \cite{T}, Remark 2) tells us that
the higher direct image sheaves of $\mathcal{K}_M(L)$ vanish under these assumptions:
\begin{eqnarray*}
R^q\pi_* \big(\mathcal{K}_M(L)\big) =0,\ \ q>0.
\end{eqnarray*}

\medskip
Let $(M,\sigma)$ be a Hermitian complex manifold and $D$ a divisor on $M$.
Let $\OO(D)$ be the sheaf of germs of meromorphic functions $f$ such that $\mbox{div}(f)+D\geq 0$.
We denote by $L_D$ the associated holomorphic line bundle such that sections in $\OO(D)$ correspond
to holomorphic sections in $L_D$.
The constant function $f\equiv 1$ induces a meromorphic section $s_D$ of $L_D$
such that $\mbox{div}(s_D)=D$. One can then identify sections in $\OO(Z)$
with sections in $\OO(L_D)$ by $g\mapsto g\otimes s_D$,
and we denote the inverse mapping by $u\mapsto u\cdot s_D^{-1}$.
If $D$ is an effective divisor, then $s_D$ is a holomorphic section of $L_D$ and
$\OO(-D) \subset \OO \subset \OO(D)$.

More generally, if $F$ is an arbitrary divisor and $D$ is effective, then there is the natural inclusion $\OO(F)\subset \OO(F+D)$
which induces the inclusion $\OO(L_{F})\subset \OO(L_{F+D})$ given by
$u\mapsto (u\cdot s_{F}^{-1})\otimes s_{F+D}$.
For open sets $U\subset M$, this also induces the natural inclusion of smooth sections of vector bundles 
\begin{eqnarray*}
\Gamma(U,L_{F}) \subset \Gamma(U,L_{F+D}).
\end{eqnarray*}

\medskip
Let now $Z:=\pi^{-1}(\Sing X)$ be the unreduced exceptional divisor,
and $L_{-Z}$ the line bundle associated to $\OO(-Z)$,
the sheaf of holomorphic functions vanishing at least to the order of $Z$ on the exceptional set.
We will now show that $L_{-Z}$ is locally semi-positive with respect to $\pi$.
Note that a metric on $L_{-Z}$ is given by a non-vanishing smooth section of 
$L_{-Z}^*\otimes\o{L^*_{-Z}} = L_Z \otimes \o{L_Z}$.

So, let $x_0\in X$ be an isolated singularity. We can assume that $X$ is locally embedded in some $\C^N$
such that $x_0=0 \in \C^N$. Let $z_1, ..., z_N$ be the standard Euclidean coordinates of $\C^n$.
Then $Z$ is the common zero set of the holomorphic functions $\pi^* z_1, ... , \pi^* z_N$,
$$Z \ = \ V \big( \pi^* z_1, ... , \pi^* z_N \big).$$
So, the $\pi^* z_1, ..., \pi^* z_N$ generate $\OO(-Z)$. They define holomorphic sections
$s_j \ := \ \pi^* z_j \otimes s_{-Z}$ of $L_{-Z}$,
which have no common zeroes. Let
$$ F \ \ := \ \  |\pi^* z_1|^2 + ... + |\pi^* z_N|^2.$$
Then
$$ h:= F^{-1} \otimes s_Z \otimes \o{s_Z}$$
is a non-vanishing smooth section of $L_Z \otimes \o{L_Z}$,
hence defining a smooth Hermitian metric
$$h \ = \ e^{-\varphi}$$
on $L_{-Z}$, where $\varphi$ is a plurisubharmonic weight: let $g$ be a holomorphic function with $\mbox{div}(g)=Z$.
Then, in a local frame for $L_Z \otimes \o{L_Z}$,
$$ h \ \ = \ \ \frac{|g|^2}{ |\pi^* z_1|^2 + ... + |\pi^* z_N|^2},$$
so that
$$\varphi \ \ = \ \ \log \big( |\pi^* z_1/g|^2 + ... + |\pi^* z_N/g|^2\big).$$

Hence, $L_{-Z}$ is actually locally semi-positive with respect to $\pi$,
and hence, as desired,
$$R^q\pi_* \big( \mathcal{K}_M ( L_{-Z} ) \big) \ = \ R^q\pi_* \big( \mathcal{K}_M \otimes \OO(-Z) \big) \ = \ 0$$
for $q>0$ by Takegoshi's vanishing theorem.

\bigskip
Finally, by use of Serre duality between $H^{0,q}_{w,loc} \big( \pi^{-1}(\Omega), L_Z\big)$
and $H^{n,n-q}_{w,cpt}\big( \pi^{-1}(\Omega), L_{-Z}\big)$,
the last statement, \eqref{eq:main02d}, of Theorem \ref{thm:main02}
follows completely analogous to the second statement, \eqref{eq:main02b}, as above.

\newpage

\section{A vanishing theorem of Kodaira--Grauert--Riemenschneider--Takegoshi type}\label{sec:vanishing}

In this short section, we shall prove Theorem \ref{thm:main03}.
Recall the following:

\begin{thm}\label{thm:NK}
Let $M$ be a compact K\"ahler manifold of dimension $n$ and $E$ an almost positive
line bundle on $M$. Then: $H^{n,q}(M,E)=0$ for $q>0$.
\end{thm}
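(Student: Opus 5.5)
The plan is to prove Theorem \ref{thm:NK} by the classical Bochner--Kodaira--Nakano method on harmonic forms, followed by a unique continuation argument. We assume $M$ is connected; otherwise we argue componentwise, and the positivity set must then meet each component. In the application to Theorem \ref{thm:main03} the manifold will be a resolution of an irreducible $X$, hence connected.

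\emph{Setup.} Fix a K\"ahler metric $\omega$ on $M$. By the definition of almost positivity, choose a Hermitian metric $h$ on $E$ such that
\begin{itemize}
\item the Chern curvature satisfies $i\Theta_h(E)\geq 0$ on all of $M$;
\item $i\Theta_h(E)>0$ on some nonempty open set $U$.
\end{itemize}
By Hodge theory on the compact manifold $M$, every class in $H^{n,q}(M,E)$ has a unique representative $u$ that is harmonic for $\Delta''=\dq\dq^*+\dq^*\dq$. In particular $\dq u=0$, $\dq^* u=0$, and $u$ is smooth.

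\emph{Curvature identity.} Since $\omega$ is K\"ahler, the Bochner--Kodaira--Nakano identity gives
\begin{eqnarray*}
0 \ = \ \|\dq u\|^2+\|\dq^* u\|^2 \ = \ \|D'^* u\|^2 + \int_M \langle [i\Theta_h(E),\Lambda]u,u\rangle\, dV_\omega .
\end{eqnarray*}
We diagonalise $i\Theta_h(E)$ with respect to $\omega$ at each point, with eigenvalues $\lambda_1\leq\dots\leq\lambda_n$. For $(n,q)$-forms the standard computation yields
\begin{eqnarray*}
\langle [i\Theta_h(E),\Lambda]u,u\rangle \ \geq \ (\lambda_1+\dots+\lambda_q)\,|u|^2 \ \geq\ 0 .
\end{eqnarray*}
Both terms on the right of the identity are therefore nonnegative, so both vanish. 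Hence the integrand $\langle [i\Theta_h(E),\Lambda]u,u\rangle$ vanishes pointwise. On $U$ all $\lambda_j$ are strictly positive and $q>0$, so the lower bound forces $u\equiv 0$ on $U$.

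\emph{Unique continuation.} It remains to pass from $u\equiv 0$ on $U$ to $u\equiv 0$ on $M$. In local holomorphic frames and coordinates, $\Delta''$ acting on $E$-valued $(n,q)$-forms is a second order elliptic system. Its principal part is diagonal and scalar, namely a multiple of the Laplace--Beltrami operator $\Delta_\omega$ applied componentwise, with lower-order coupling. Aronszajn's unique continuation theorem applies to such systems. Since $\Delta'' u=0$ and $u$ vanishes on the nonempty open set $U$, it vanishes on all of the connected manifold $M$. Thus every class in $H^{n,q}(M,E)$ is zero for $q>0$.

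I expect the only delicate point to be this unique continuation step: one has to check that the Aronszajn--Cordes theorem applies, that is, that the principal symbol of $\Delta''$ is scalar in suitable local trivialisations, which holds because $\omega$ is smooth. If one prefers to avoid it, an alternative route is to note that an almost positive bundle is big and nef, and to invoke Kawamata--Viehweg type vanishing; I regard the harmonic-theory argument as the more self-contained one.
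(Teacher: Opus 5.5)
Your proof is correct and is essentially the argument the paper intends: the paper gives no details beyond invoking the Bochner--Kodaira--Nakano inequality and citing \cite{Duke}, Theorem 3.6, and your steps are the standard way to carry that out (harmonic representative, the Nakano identity forcing $u\equiv 0$ where $i\Theta_h(E)>0$, then Aronszajn's unique continuation on the connected manifold $M$). Your connectedness caveat is consistent with the paper, which defines almost positivity only on irreducible spaces.
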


A proof, which is standard complex differential geometry by means of the Bochner--Kodaira--Nakano
inequality can be found, e.g., in \cite{Duke}, Theorem 3.6.

\medskip
It is now straight forward to prove Theorem \ref{thm:main03} by means of Theorem \ref{thm:main01}
and Theorem \ref{thm:main02}.
So, let $X$ be an irreducible subvariety of a compact K\"ahler manifold with only isolated
canonical Gorenstein singularities of the first kind in Theorem \ref{thm:main01},
and $L\rightarrow X$ an almost positive holomorphic line bundle over $X$.

We may assume that $\pi: M\rightarrow X$ is an embedded resolution
obtained by finitely many blow-ups (i.e. monoidal transformations) along smooth centers, see \cite{BM}, Theorem 13.4.
So, $M$ can be interpreted as a submanifold in a finite product of K\"ahler manifolds,
inheriting a K\"ahler metric.


It is now easy to see just as in the proof of Theorem \ref{thm:main02} by means
of Takegoshi vanishing, that
pull-back of forms induces for any $q\geq0$ a natural isomorphism
\begin{eqnarray}\label{eq:main02a'}
H^q\big(X, \mathcal{K}_X^s (L) \big) \ \cong \ H^{n,q}_s(X,L)
\overset{\cong}{\longrightarrow} H^q\big(M,\mathcal{K}_M (\pi^*L)\big).
\end{eqnarray}
But $\pi^* L$ remains almost positive. 
So, the cohomology groups in \eqref{eq:main02a'} vanish for $q>0$ by Theorem \ref{thm:NK}.
By $L^2$-Serre duality for singular spaces, \cite{Serre},
we deduce that also 
$$H^{0,n-q}_w(X,L^*) \ = \ 0$$
for all $q>0$.




\bigskip
\section{Characterization of canonical surface singularities}



In this section, we prove Theorem \ref{thm:main04}, i.e., canonical surface singularities are classified
with respect to the distinction in Theorem \ref{thm:main01}.

Let $x_0 \in X$ be a so-called du~Val singularity, i.e., a canonical singularity of dimension two.
These can be realized as hypersurface singularities in $\C^3$. So, $x_0$ is automatically Gorenstein.

\medskip
\subsection{$A_n$-singularities}


In this subsection, we prove that $A_n$-singularities, $n\geq 1$, are of the first kind \eqref{eq:type0} in Theorem \ref{thm:main01}.

\medskip
We can assume without loss of generality that the $A_n$-singularity is realised as the origin
in
$$X \ \ = \ \ \{ z^{n+1}-xy=0 \} \ \ \subset \C^3\ ,$$
where the hypersurface $X$ is given as the zero set of $f(x,y,z) = z^{n+1} -xy$.
Let us deal with $X$ as the quotient singularity described by the
branched $n+1$ to $1$ covering
\begin{eqnarray*}
\pi: \C^2 &\longrightarrow & X\ ,\\
(s,t) &\mapsto& (s^{n+1}, t^{n+1}, st )\ .
\end{eqnarray*}

\bigskip
To prove that the $A_n$-singularity is of the first kind, we will compute explicitly
that the generator of $\mathcal{K}_{X}$, the so-called {\it structure form} or {\it Poincar\'e residue},
is in the domain of $\dq_s$. This shows that $\mathcal{K}_{X} \subset \mathcal{K}_{X^s}$,
and the other inclusion is clear by definition.

The singularity of $X$ is a canonical Gorenstein singularity. So, $\mathcal{K}_{X} = \omega_{X}$
is locally free of rank one. By use of the analytic realisation of the adjunction formula (see, e.g., \cite{RSW}),
it is generated by the Poincar\'e residue
\begin{eqnarray*}
\omega \ \ = \ \ \frac{dx\wedge dy}{\partial f / \partial z} \ \ = \ \ \frac{dx\wedge dy}{(n+1)  z^n}\ .
\end{eqnarray*}
We need to show that $\omega$ is in the domain of the $\dq_s$-operator.
This can be done by use of the cut-off procedure described in Section \ref{ssec:cut-off}.

\bigskip
Hence, let
\begin{eqnarray*}
\omega_k (\zeta) \ \ := \ \ \mu_k(\zeta) \cdot \omega(\zeta)\ ,
\end{eqnarray*}
where $\mu_k(\zeta):=\rho_k\big(\log(-\log r(\|\zeta\|))\big)$ is the cut-off function \eqref{eq:cutoff1}
from Section \ref{ssec:cut-off}.
As above, since the $\mu_k$ are uniformly bounded in $k$,
it follows by Lebesgue's dominated convergence that
\begin{eqnarray*}
\omega_k = \mu_k \cdot \omega \ \longrightarrow \omega
\end{eqnarray*}
in $L^{2,0}$. It remains to prove that also
\begin{eqnarray*}
\dq \omega_k = \dq \mu_k \wedge \omega \ \longrightarrow 0
\end{eqnarray*}
in $L^{2,1}$. This follows again by dominated convergence once we can show that the
$L^2$-norm of the $\dq \omega_k$ is also uniformly bounded in $k$.
To see this, recall that there is a
constant $C>0$ such that
\begin{equation}\label{eq:cutoff2c}
\big| \dq \mu_k(\zeta)\big|_{X} \ \leq \ \big| \dq \mu_k(\zeta)\big|_{\C^3} \ \leq \ 
C\frac{\chi_k(|\zeta|)}{\|\zeta\| \big| \log\|\zeta\|\big|},
\end{equation}
where $\chi_k$ is the characteristic function of $[e^{-e^{k+1}}, e^{-e^k}]$.
Here, note that the norm of the form $\dq\mu_k$
is only decreasing when pulled-back to $X$.

So, we need an estimate for
\begin{eqnarray*}
I_k &:=& \|\dq\mu_k \wedge \omega\|^2_{X} \ \ = \ \ \int_{X} |\dq \mu_k|^2_{X} |\omega|^2_{X} dV_{X} \\
&=& \int_{D_k} |\dq \mu_k|^2_{X} \ \omega \wedge \o{\omega} \ \ \leq \ \
C \int_{D_k} \frac{\omega \wedge \o{\omega}}{\|\zeta\|^2 \log^2 \|\zeta\|} \ ,
\end{eqnarray*}
where we let $D_k = \{\zeta\in X: e^{-e^{k+1}} < |\zeta| < e^{-e^k} \}$
and use $|\omega|^2_{X} dV_{X} = \omega \wedge \o{\omega}$.

\bigskip
Integrating directly on $X$ is tedious, but we can use the covering $\pi$ to pull-back
the problem to $\C^2$ with the convenient fact that
\begin{eqnarray*}
\pi^* \omega\ (s,t) &=& \frac{ds^{n+1} \wedge dt^{n+1}}{(n+1) (st)^n} \ = \ (n+1)\ ds\wedge dt\ .
\end{eqnarray*}
Hence the integral $I_k$ can be estimated from above by
\begin{eqnarray*}
\wt{I_k} &:=& \int_{\pi^{-1}(D_k)} \frac{dV(s,t)}{\big( |s|^{2n+2} + |t|^{2n+2} + |st|^2\big) \log^2\big( |s|^{2n+2} + |t|^{2n+2} + |st|^2\big)}.
\end{eqnarray*}
But these integrals are actually uniformly bounded, independent of $k$,
as was calculated in the proof of \cite{Slarw}, Theorem 5.1, see equation (5.3) ibid.

\bigskip
\subsection{Non $A_n$-singularities}

In this subsection, we prove by contradiction that singularities of type $D_n$, $n\geq 4$, 
and of type $E_6$, $E_7$ or $E_8$ 
are of the second kind \eqref{eq:type1} in Theorem \ref{thm:main01}.
So, let $x_0$ be one of these, and
assume that $x_0$ is of the first kind \eqref{eq:type0} in Theorem \ref{thm:main01}

\medskip
Let $\pi: M \rightarrow X$ be a resolution of singularities. Then
$$\big(R^1 \pi_* \OO_M\big)_{x_0} \ = \ 0\ ,$$
because $x_0$ is a rational singularity.
This implies by the use of Theorem \ref{thm:main02}, \eqref{eq:main02b}, that the $L^2$-$\dq_w$-complex
\begin{eqnarray*}\label{eq:intro2b}
0\rightarrow \mathcal{O}_X \hookrightarrow \mathcal{C}^{0,0} \overset{\dq_w}{\longrightarrow}
\mathcal{C}^{0,1} \overset{\dq_w}{\longrightarrow} \mathcal{C}^{0,2} \overset{\dq_w}{\longrightarrow} ... 
\end{eqnarray*}
is particularly exact at $(\mathcal{C}^{0,1})_{x_0}$,
as we assume that $x_0$ is of the first kind. This means that the $\dq_w$-equation is locally solvable in $x_0$
on the level of $(0,1)$-forms. But that contradicts \cite{Slarw}, Theorem 5.6,
where we have computed that there are obstructions to this solvability for the types of singularity
under observation. 

Hence, by contradiction, $x_0$ must be of the second kind.


\bigskip
\begin{thebibliography}{99999}





\bibitem[A]{Alt} {\sc H.\ W.\ Alt}, {\em Lineare Funktionalanalysis}, Springer-Verlag, Berlin, 1992.


\bibitem[ALRSW]{Slarw} {\sc M.\ Anderson, R.\ L\"ark\"ang, J.\ Ruppenthal, H.\ Samuelsson\ Kalm, E.\ Wulcan},
Estimates for the $\dq$-equation on canonical surfaces,
{\em J. Geom. Anal.} {\bf 30} (2020), 2974--3001.



\bibitem[BM]{BM} {\sc E.\ Bierstone, P.\ Milman},
Canonical desingularization in characteristic zero by blowing-up the maximum strata of a local invariant,
{\em Inventiones Math.} {\bf 128} (1997), {\em no. 2}, 207--302.



\bibitem[D1]{D}{\sc J.-P.\ Demailly}, {\em Complex Analytic and Differential Geometry},
online book, available at {\sf www-fourier.ujf-grenoble.fr/$\sim$demailly/manuscripts/agbook.pdf}, Institut Fourier, Grenoble.




\bibitem[DS]{DiSi}{\sc T.-C.\ Dinh, N.\ Sibony},
Pull-back of currents by holomorphic maps, {\em manuscripta math.} {\bf 123} (2007), 357--371.


\bibitem[D2]{Du}{\sc A.\ H.\ Durfee}, Fifteen characterizations of rational double points and simple critical points,
{\em Enseign. Math. (2)} {\bf 25} (1979), 131--163.










\bibitem[G1]{G1}{\sc H.\ Grauert},
On Levi's problem and the embedding of real analytic manifolds,
{\em Ann. Math.} {\bf 68} (1958), 460--472.


\bibitem[G2]{G2} {\sc H.\ Grauert},
\"Uber Modifikationen und exzeptionelle analytische Mengen,
{\em Math. Ann.} {\bf 146} (1962), 331--368.




\bibitem[GR]{GR}{\sc H.\ Grauert, O.\ Riemenschneider},
Verschwindungss\"atze f\"ur analytische Kohomologiegruppen auf komplexen R\"aumen,
{\em Invent. Math.} {\bf 11} (1970), 263--292.





\bibitem[K]{Ka}{\sc U.\ Karras}, Local cohomology along exceptional sets,
{\em Math. Ann.} {\bf 275} (1986), 673--682.




\bibitem[L]{La}{\sc H.\ Laufer}, On rational singularities,
{\em Amer. J. Math.} {\bf 94} (1972), 597--608.




\bibitem[OV]{OV}{\sc N.\ {\O}vrelid, S.\ Vassiliadou},
$L^2$-$\dq$-cohomology groups of some singular complex spaces,
{\em Invent. Math.} {\bf 192} (2013), no. 2, 413--458.




\bibitem[P1]{P1}{\sc W. Pardon}, The $L^2$-$\dq$-cohomology of an algebraic surface,
{\em Topology} {\bf 28} (1989), no. 2, 171--195.


\bibitem[PS1]{PS1}{\sc W.\ Pardon, M.\ Stern},
$L^2$-$\dq$-cohomology of complex projective varieties, {\em J. Amer. Math. Soc.} {\bf 4} (1991), no. 3, 603--621.



\bibitem[PS2]{PS2}{\sc W.\ Pardon, M.\ Stern},
Pure Hodge structure on the $L^2$-cohomology of varieties with isolated singularities,
{\em J. reine angew. Math.} {\bf 533} (2001), 55--80.






\bibitem[P2]{P} {\sc D.\ Prill}, The divisor class groups of some rings of holomorphic functions,
{\em Math. Z.} {\bf 121} (1971), 58--80.



\bibitem[RR]{RR}{\sc J.-P.\ Ramis, G.\ Ruget}, Complexe dualisant et th\'eor\`eme de dualit\'e en g\'eom\'etrie analytique complexe,
{\em Inst. Hautes \'Etudes Sci. Publ. Math.} {\bf 38} (1970), 77--91.




\bibitem[R1]{Rd}{\sc W.\ Rudin}, {\em Functional Analysis}, 
International Series in Pure and Applied Mathematics, McGraw-Hill, New York, 1991.


\bibitem[R2]{Rp1}{\sc J.\ Ruppenthal}, About the $\dq$-equation at isolated singularities with regular exceptional set,
{\em Internat. J. Math.} {\bf 20} (2009), no. 4, 459--489.




\bibitem[R3]{Rp7}{\sc J.\ Ruppenthal}, The $\dq$-equation on homogeneous varieties with an isolated singularity,
{\em Math. Z.} {\bf 263} (2009), 447--472.





\bibitem[R4]{Duke} {\sc J.\ Ruppenthal}, $L^2$-theory for the $\dq$-operator on compact complex spaces,
{\em Duke Math. J.} {\bf 163} (2014), no. 15, 2887--2934.





\bibitem[R5]{parabolic}{\sc J.\ Ruppenthal},
Parabolicity of the regular locus of complex varieties,
{\em Proc. Amer. Math. Soc.} {\bf 144} (2016), no.1, 225--233



\bibitem[R6]{Serre}{\sc J.\ Ruppenthal},
$L^2$-Serre duality on singular complex spaces and rational singularities,
{\em Int. Math. Res. Not. IMRN} {\bf 23} (2018), 7198--7240.


\bibitem[R7]{Toulouse}{\sc J.\ Ruppenthal}.
$L^2$-theory for the $\dq$-operator on complex spaces with isolated singularities,
{\em Ann. Fac. Sci. Toulouse Math.} {\bf 28} (2019), no.2, 225--258.


\bibitem[RSW]{RSW}{\sc J.\ Ruppenthal, H.\ Samuelsson Kalm, E.\ Wulcan}.
Adjunction for the Grauert--Riemenschneider canonical sheaf and extension of  $L^2$-cohomology classes,
{\em Indiana Univ. Math. J.} {\bf 64} (2015), no. 2, 533--558.






\bibitem[S1]{JPSerre}{\sc J.-P.\ Serre},
Un th{\'e}or{\`e}me de dualit{\'e}, {\em Comm. Math. Helv.} {\bf 29} (1955), 9--26.



\bibitem[S2]{Sibony}{\sc N.\ Sibony},
Quelques probl\`emes de prolongement de courants en analyse complexe,
{\em Duke Math. J.} {\bf 52} (1985), no. 1, 157--197.






\bibitem[T]{T} {\sc K.\ Takegoshi}, Relative vanishing theorems in analytic spaces,
{\em Duke Math. J.} {\bf 51} (1985), no. 1, 273--279.



\end{thebibliography}
\end{document}